\let\BFseries\bfseries\def\bfseries{\BFseries\mathversion{bold}} 
\newcommand{\ind}{1\hspace{-0.098cm}\mathrm{l}}
\newcommand{\eps}{\varepsilon}
\theoremstyle{plain}
\newtheorem{thm}{Theorem}
\newtheorem{lem}[thm]{Lemma}
\newtheorem{prop}[thm]{Proposition}
\newtheorem{cor}[thm]{Corollary}
\theoremstyle{definition}
\newtheorem{rem}[thm]{Remark}
\renewenvironment{proof}[1][] {\smallskip \noindent {\bf Proof#1:} }{\hspace*{\fill}$\square$\medskip\par}
\def\P{{\bf {\mathbb{P}}}}
\newcommand{\pr}[1]{\P\left(#1\right)}
\def\E{\mathbb{E}}
\def\R{\mathbb{R}}\def\Z{\mathbb{Z}} \def\d{\mathrm{d}}
\newcommand{\esssup}{\operatorname*{esssup}}
\newcommand{\deq}{\stackrel{d}{=}}
\begin{document}
\title{On the Small Deviation Problem \\for Some Iterated Processes}
\author{Frank Aurzada and Mikhail Lifshits}
\date{June 16, 2008}
\maketitle
\begin{abstract} 
We derive general results on the small deviation behavior 
for some classes  of iterated processes. This allows us, in particular, 
to calculate the rate of the small deviations for $n$-iterated Brownian motions and, more generally, 
for the iteration of $n$ fractional Brownian motions. 
We also give a new and correct proof  of some results in \cite{nane}. 
\end{abstract}
\noindent{\slshape\bfseries Keywords.} Small deviations; small ball problem; iterated Brownian motion; iterated fractional Brownian motion; iterated process; local time.

\bigskip

\noindent{\slshape\bfseries 2000 Mathematics Subject
Classification.} 60G18; 60F99; 60G12; 60G52

\section{Introduction}

This article is concerned with the small deviation problem for iterated processes. 
We consider two independent, real-valued stochastic processes $X$ and $Y$ 
(precise assumptions are given below), define the iterated process by 
$(X\circ Y)(t):=X(Y(t))$, $t\in [0,1]$, and investigate the small deviation function 
\begin{equation}
-\log \pr{ \sup_{t\in[0,1]} |X(Y(t))| \leq \eps},
\label{e:sdit} 
\end{equation}
when $\eps\to 0$.

The goal of this article is
\begin{itemize}
 \item to provide general results concerning the order of (\ref{e:sdit}) 
   -- given that we know the small deviation probabilities for 
   the processes $X$ and $Y$, respectively, and that $Y$ has a {\it continuous modification};
 \item to study some nice examples of processes to which this technique can be 
 applied, among them the iteration of $n$ (fractional) Brownian motions; 
    and
 \item to show how the technique can be modified if $Y$ has jumps. This is illustrated 
    by several examples, among them the $\alpha$-time Brownian motion, previously studied 
    in \cite{nane}. Here, we give a correct proof of (a weaker version of) the results 
    from \cite{nane}.
\end{itemize}

Small deviation problems (also called small ball problems or lower tail probability problems) 
were studied intensively during recent years, which is due to many connections to other subjects 
such as the functional law of the iterated logarithm of Chung type, strong limit laws in statistics, 
metric entropy properties of linear operators, quantization, and several other approximation quantities 
for stochastic processes. For a detailed account, we refer to the surveys \cite{lif} and 
\cite{lishao} and to the literature 
compilation \cite{sdbib}.

The interest in iterated processes, in particular iterated Brownian motion, 
started with the works of Burdzy (cf.\ \cite{burdzy1} and \cite{burdzy2}). 
Iterated processes have interesting connections to higher order PDEs, cf.\ 
\cite{alloubazehng} and \cite{nourdin} for some recent results.
Small deviations of iterated processes or the corresponding result for the law of the iterated 
logarithm are treated in \cite{huplvshi} ($X$ and $Y$ Brownian motions), \cite{khoshlewis} 
($X$ Brownian motion, $Y=|Y'|$ with $Y'$ being Brownian motion), \cite{nane} (see Section~\ref{sec:alphatime} below), 
\cite{lindeshi} ($X$ fractional Brownian motion, $Y$ a subordinator), and, most recently, 
\cite{lindezipfel} ($X$ fractional Brownian motion, $Y$ a subordinator, and the sup-norm is taken over 
a possibly fractal index set). 

In Section~\ref{sec:general}, we give general results under the assumption that the small 
deviation probabilities of $X$ and $Y$, respectively, are known to some extent and that $Y$ has a continuous 
modification. The proofs for these results are given in Section~\ref{sec:proofs} and the results 
are illustrated with several examples in Section~\ref{sec:examples}. In Section~\ref{sec:alphatime}, 
we treat examples where $Y$ has jumps, in particular, the so-called $\alpha$-time Brownian motion, 
studied earlier in \cite{nane}.
Finally, we mention some possible extensions and applications of our results and collect some open questions in 
Section~\ref{sec:remarks}.

\section{General results} \label{sec:general}
Before we formulate our main results, let us define some notation. 
We write $f\preceq g$ or
$g\succeq f$ if $\limsup f/g <\infty$, while the equivalence $f\approx g$ means that
we have both $f\preceq g$ and $g\preceq f$. Moreover, $f\lesssim g$ or
$g\gtrsim f$ say that $\limsup f/g \leq 1$. Finally, the strong equivalence $f\sim g$ means that
$\lim f/g=1$.

We say that a process $X$ 
is $H$-self-similar if $(X(ct))\deq (c^H X(t))$ for all $c>0$, where $\deq$ means that the 
finite-dimensional distributions coincide. Recall that, for example, fractional Brownian 
motion with Hurst parameter $H$ is $H$-self-similar. However, there are many interesting 
self-similar processes outside the Gaussian framework, e.g.\ a strictly 
$\alpha$-stable L\'{e}vy process is $1/\alpha$-self-similar (\cite{st}, \cite{embrechtsmaejima}, \cite{samorodnitsky}).

Let us consider stochastic processes $(X(t))_{t\geq 0}$ and $(Y(t))_{t\geq 0}$ that are 
independent and such that $X(0)=0$ and $Y(0)=0$ almost surely. We extend $X$ for $t<0$ in 
the usual manner using an independent copy: namely, let $X'$ be an independent copy of $X$, 
and set $X(t):=X'(-t)$ for all $t<0$. We call this process {\it two-sided}.

Recall that if $X$ is a classical fractional Brownian motion, it has dependent 
``wings'' $(X(t))_{t\geq 0}$ and $(X(t))_{t\leq 0}$; hence it does not fit in the
scope of the present section. Nevertheless we will show below how the technique can be adjusted
by using the stationarity of increments instead of the independence. 

In this section, we assume that 
\begin{itemize}
\item $X$ is an $H$-self-similar, two-sided process. 
\item $Y$ has a continuous modification. 
\end{itemize}

If we know the weak asymptotic order of the small deviation probability of the processes 
$X$ and $Y$, respectively, we can determine that of the process $X\circ Y$.

\begin{thm} 
Let $\theta,\tau>0$. Then, under the above assumptions, the relations
\begin{eqnarray} - \log \pr{ \sup_{t\in[0,1]} |X(t)| \leq \eps} 
&\approx & \eps^{-\theta} \notag \\ 
- \log \pr{ \sup_{t\in[0,1]} |Y(t)| \leq \eps} 
&\approx & \eps^{-\tau} \label{e:weak:y}
\end{eqnarray}
imply 
$$-\log \pr{ \sup_{t\in[0,1]} |X(Y(t))| \leq \eps} \approx  \eps^{-1/(1/\theta+H/\tau)}.
$$
The implication also holds if $\approx$ is replaced by $\preceq$ or $\succeq$, respectively. 
For translating lower bounds (i.e.\ $\preceq$ in the relations above), the assumption that 
$Y$ is continuous can be dropped. 
\label{thm:main}
\end{thm}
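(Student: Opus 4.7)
The plan is to decouple $X$ and $Y$ via conditioning on the random range $Y([0,1])$ and to exploit the $H$-self-similarity of $X$: the distribution of $|X|$ on an interval of length $m$ equals that of $m^H$ times $|X|$ on the unit interval. The two assertions $\preceq$ and $\succeq$ come out of the same scheme, but via opposite inclusions of $Y([0,1])$, so I would prove them separately; this also makes it clear why the continuity hypothesis only matters for one half.

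For the $\preceq$ direction (a lower bound on the probability), set $M:=\sup_{t\in[0,1]}|Y(t)|$. Irrespective of continuity, $Y([0,1])\subseteq[-M,M]$, so conditioning on $\{M\leq m\}$ and using the independence of $X$ and $Y$ together with $H$-self-similarity yields
\[
\pr{\sup_{t\in[0,1]}|X(Y(t))|\leq\eps}
\geq \pr{\sup_{|s|\leq 1}|X(s)|\leq \eps/m^H}\,\pr{M\leq m}.
\]
Since the two wings of $X$ are independent, $\pr{\sup_{|s|\leq 1}|X(s)|\leq\eta}=\pr{\sup_{s\in[0,1]}|X(s)|\leq\eta}^2$, so this factor inherits the small deviation rate $\eta^{-\theta}$. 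Taking $-\log$ and applying the hypotheses on $X$ and $Y$ bounds $-\log\pr{\sup_t|X(Y(t))|\leq\eps}$ from above by $C_1(\eps/m^H)^{-\theta}+C_2 m^{-\tau}$. Balancing the two terms at $m\asymp \eps^{\theta/(\tau+H\theta)}$ gives the desired rate $\eps^{-1/(1/\theta+H/\tau)}$.

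For the $\succeq$ direction (an upper bound on the probability) I would use the continuity of $Y$. Because $Y$ is continuous with $Y(0)=0$, the image $Y([0,1])$ is a closed interval containing $0$, so on $\{M\geq m\}$ at least one of the inclusions $[0,m]\subseteq Y([0,1])$ or $[-m,0]\subseteq Y([0,1])$ must hold. Splitting $\{M\geq m\}$ accordingly and using independence together with $H$-self-similarity gives
\[
\pr{\sup_{t\in[0,1]}|X(Y(t))|\leq\eps}
\leq 2\,\pr{\sup_{s\in[0,1]}|X(s)|\leq\eps/m^H}+\pr{M<m}.
\]
With the same choice of $m$ as before, each summand is bounded by $\exp\bigl(-c\,\eps^{-1/(1/\theta+H/\tau)}\bigr)$, so $-\log\pr{\sup_t|X(Y(t))|\leq\eps}\geq c\,\eps^{-1/(1/\theta+H/\tau)}-O(1)$.

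The one genuinely delicate step is the $\succeq$ half: it is the connectedness of $Y([0,1])$ that guarantees the process $X$ is actually probed on a full interval (rather than on a possibly sparse random subset), which is precisely what allows the self-similarity rescaling to go through. The argument for $\preceq$ never uses this, which explains the final sentence of the statement. Finally, since each direction uses only one side of the hypotheses on $X$ and $Y$, the same proof yields the implication with $\approx$ uniformly replaced by $\preceq$ or by $\succeq$.
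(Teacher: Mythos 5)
Your argument is correct, but it follows a genuinely different route from the paper. The paper conditions on $Y$, applies $H$-self-similarity to express the conditional probability as (up to constants) $\exp\bigl(-C\eps^{-\theta}((-N)^{H\theta}+M^{H\theta})\bigr)$ with $N=\inf Y$, $M=\sup Y$, bounds this by $\exp\bigl(-\tilde C\eps^{-\theta}(M-N)^{H\theta}\bigr)$, and then evaluates the resulting Laplace transform $\E\exp\bigl(-\lambda(\operatorname{range}Y)^{H\theta}\bigr)$ via the de Bruijn Tauberian theorem (Lemma~\ref{lem:range}); the asymptotics of that Laplace transform encode the optimization over the size of $Y$'s range automatically. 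You instead introduce a free parameter $m$, truncate at the event $\{\sup|Y|\le m\}$ (for the lower bound on the probability, where $Y([0,1])\subseteq[-m,m]$ needs no continuity) or at $\{\sup|Y|\ge m\}$ (for the upper bound, where connectedness of $Y([0,1])$ forces $[0,m]$ or $[-m,0]$ into the image), factor by independence, and then choose $m\asymp\eps^{\theta/(H\theta+\tau)}$ to balance the two error terms. This makes the Laplace method explicit and avoids any Tauberian machinery, so it is more elementary and arguably more transparent about where continuity of $Y$ enters and where it does not. What the paper's route buys is uniformity: the same Laplace-transform computation, carried out with sharp constants, yields the strong asymptotics with an explicit leading constant in Theorem~\ref{thm:mainstrong}, whereas the one-parameter balancing argument only tracks the exponent and would need extra care to recover the constant. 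One small point of hygiene that you should make explicit, as the paper does: the hypotheses on $X$ and $Y$ are only asymptotic, so one either observes that $m\to0$ and $\eps/m^H\to0$ as $\eps\to0$, or replaces the asymptotic bounds by two-sided bounds valid for all $\eps>0$ at the cost of adjusting the multiplicative constants.
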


\begin{rem} 
Note that the resulting exponent is always less than $\theta$. Therefore, 
the small deviation probability of $X\circ Y$ is always larger than the one of $X$. 
This is of course not true in general when comparing $X\circ Y$ to $Y$. 
\end{rem}

\begin{rem} \label{rem:noselfsim} 
In fact, for the proof it is sufficient to know that 
$$
-\log \pr{ \sup_{t\in [0,T]} |X(t)| \leq \eps} 
\approx T^{\theta H} \eps^{-\theta},\qquad\text{when $\eps\to 0$,}
$$ 
for all $T>0$, instead of the self-similarity property and the given small deviations of $X$. 
\end{rem}

Furthermore, provided we know the {\it strong} order of the small deviation functions, we can prove 
a result for the strong asymptotic order for that of the iterated process.

\begin{thm} 
Let $\tau>0$ and $\theta:=1/H>0$. Then, under the above assumptions, the relations
\begin{eqnarray} 
- \log \pr{ \sup_{t\in[0,1]} |X(t)| \leq \eps} &\sim & k \eps^{-\theta}\notag 
\\ 
- \log \pr{ \sup_{s,t\in[0,1]} |Y(s)-Y(t)| \leq \eps} 
  &\sim & \kappa \eps^{-\tau} 
\label{e:knownrange}
\end{eqnarray}
imply 
$$
-\log \pr{ \sup_{t\in[0,1]} |X(Y(t))| \leq \eps} 
\sim \kappa^{1/(1+\tau)} \tau^{-\tau/(1+\tau)} (1+\tau) k^{\tau/(1+\tau)}   
\eps^{-\theta \tau/(1+\tau)}.
$$
The implication also holds if $\sim$ is replaced by $\lesssim$ or $\gtrsim$, respectively.  
For translating lower bounds (i.e.\ $\lesssim$ in the relations above), 
the assumption that $Y$ is continuous can be dropped. 
\label{thm:mainstrong}
\end{thm}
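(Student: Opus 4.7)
Since $Y$ is continuous and $Y(0)=0$, the random range $\{Y(t):t\in[0,1]\}$ is a closed interval $[m,M]$ with $m\leq 0\leq M$ of length $R:=M-m=\sup_{s,t\in[0,1]}|Y(s)-Y(t)|$. Because $X$ is two-sided with independent positive and negative wings (denote the negative wing by $X'$),
$$\sup_{t\in[0,1]}|X(Y(t))|=\sup_{u\in[m,M]}|X(u)|=\max\Bigl(\sup_{u\in[0,M]}|X(u)|,\;\sup_{u\in[0,-m]}|X'(u)|\Bigr).$$
Conditioning on $Y$, using independence of $X$ from $Y$, $H$-self-similarity, and the key identity $\theta H=1$, the first assumption in (\ref{e:knownrange}) yields the conditional asymptotic
$$-\log\pr{\sup_{t\in[0,1]}|X(Y(t))|\leq\eps\mid Y}\sim k\bigl(M+(-m)\bigr)\eps^{-\theta}=k\,R\,\eps^{-\theta}.$$

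\textbf{Laplace transform and exponential Tauberian theorem.} Taking expectations turns this, heuristically, into
$$\pr{\sup_{t\in[0,1]}|X(Y(t))|\leq\eps}\approx\E\bigl[\exp(-k\,R\,\eps^{-\theta})\bigr],$$
which is the Laplace transform of $R$ at $\lambda:=k\eps^{-\theta}\to\infty$. The second hypothesis of (\ref{e:knownrange}) is precisely $-\log\pr{R\leq\delta}\sim\kappa\delta^{-\tau}$, so by de~Bruijn's exponential Tauberian theorem
$$-\log\E[e^{-\lambda R}]\sim (1+\tau)\,\tau^{-\tau/(1+\tau)}\,\kappa^{1/(1+\tau)}\,\lambda^{\tau/(1+\tau)}.$$
Substituting $\lambda=k\eps^{-\theta}$ produces exactly the claimed constant and exponent $\theta\tau/(1+\tau)$. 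Note that the critical scale for $R$ is $R^\star\sim\eps^{\theta/(1+\tau)}$, and $\eps/(R^\star)^H\sim\eps^{\tau/(1+\tau)}\to 0$, so the critical contribution does lie in the regime where the asymptotic for $X$ holds.

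\textbf{Main obstacle and the role of continuity.} The delicate step is passing from the pointwise conditional $\sim$ to an asymptotic for the unconditional expectation, since the strong relation $-\log\pr{\sup_{[0,T]}|X|\leq\eps}\sim kT\eps^{-\theta}$ is uniform only in the regime $\eps/T^H\to 0$, whereas $R$ (playing the role of $T$) may be arbitrarily small. The remedy is a bracketing argument: for each $\delta>0$ and all sufficiently small $\eps$, on the event that $R$ exceeds a suitable small threshold the conditional probability is sandwiched by $\exp(-(1\pm\delta)kR\eps^{-\theta})$, while the (exponentially unlikely) event where $R$ is atypically small is absorbed by the trivial bounds $0\leq\pr{\cdot\mid Y}\leq 1$. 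Integrating these bracketings, applying de~Bruijn in both directions to $\E[\exp(-(1\pm\delta)k\eps^{-\theta}R)]$, and letting $\delta\to 0$ gives the full $\sim$ conclusion; the same bookkeeping furnishes the one-sided $\lesssim$ and $\gtrsim$ statements from one-sided hypotheses. For the $\lesssim$ direction alone (a lower bound on the small ball probability of $X\circ Y$), continuity of $Y$ is unnecessary: even with jumps one still has $\sup_{t\in[0,1]}|X(Y(t))|\leq\sup_{u\in[m,M]}|X(u)|$ with $m=\inf_{[0,1]}Y$, $M=\sup_{[0,1]}Y$, so only the lower half of the factorization is invoked.
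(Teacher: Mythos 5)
Your proof follows the same route as the paper: reduce to the range $[N,M]$ of $Y$, use the independence of the two wings of $X$ together with $H$-self-similarity and the identity $\theta H=1$ to turn the conditional small-ball probability into $\exp(-k R\eps^{-\theta})$ with $R=M-N$, then apply de Bruijn's exponential Tauberian theorem to the Laplace transform $\E e^{-\lambda R}$ at $\lambda=k\eps^{-\theta}$. The constant you obtain is correct.

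One small word of caution on the bracketing step. Your proposal conditions on $R$ being above a threshold, but the conditional probability factorises as $f_\eps(-N)\,g_\eps(M)$ with $f_\eps(a)=\pr{\sup_{[0,1]}|X|\le \eps/a^H}$, and even when $R$ is large one of $-N$ or $M$ can be arbitrarily small, so the asymptotic regime for that factor fails and the trivial bound $0\le\pr{\cdot\mid Y}$ does not help for the lower-bound direction. The paper avoids any conditioning by upgrading the two-sided asymptotic hypothesis to two-sided inequalities
\[
C_1\, e^{-k(1+\delta)\eps^{-\theta}}\ \le\ \pr{\sup_{t\in[0,1]}|X(t)|\le\eps}\ \le\ C_2\, e^{-k(1-\delta)\eps^{-\theta}}
\]
valid for \emph{all} $\eps>0$ (at the price of constants $C_1,C_2$ depending on $\eps_0(\delta)$); these yield $C_1^2\,\E e^{-k(1+\delta)\eps^{-\theta}R}\le\pr{\sup|X\circ Y|\le\eps}\le C_2^2\,\E e^{-k(1-\delta)\eps^{-\theta}R}$ with no event splitting, and the prefactors wash out after taking $-\log$ and normalising. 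Adopting this device in place of the $R$-bracketing makes your argument airtight; otherwise the proposal is correct and essentially identical to the paper's proof.
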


It is easy to check that this theorem recovers the results from \cite{huplvshi}, where $X$ and $Y$ are Brownian motions, and \cite{khoshlewis}, where $X$ is a Brownian motion and $Y=|Y'|$ with $Y'$ being a Brownian motion.

\begin{rem} 
As in Remark~\ref{rem:noselfsim}, it is sufficient to know that 
$$
-\log \pr{ \sup_{t\in [0,T]} |X(t)| \leq \eps} 
\sim k T^{\theta H} \eps^{-\theta},\qquad\text{when $\eps\to 0$,}
$$ for all $T>0$, instead of the self-similarity property and the given 
small deviations of $X$.
\end{rem}

\begin{rem}
A careful reader would wonder why the self-similarity index of $X$ and the small deviation
index of $X$ should be related by $\theta:=1/H$.  In fact, this relation is rather
typical {\it for the supremum norm}. We refer to \cite{LS} for the explanation of this
fact in the context of small deviations of general norms. Also see \cite{samorodnitsky}.
\end{rem}

One may argue that, typically, not the probability in (\ref{e:knownrange}) is given but 
rather the small deviation probability. The following lemma translates from the small 
deviation probability into (\ref{e:knownrange}) (and backwards) if we know that a process 
satisfies the {\it Anderson property}.

Recall that the Anderson property for a random vector $Y$ taking values in a linear space $E$ means
that
\begin{equation} \label{anderson}
\pr{ Y \in A} \ge \pr{ Y \in A+e}
\end{equation}
for any $e\in E$ and any measurable symmetric convex set $A\subseteq E$, cf.\ \cite{anderson}. It is known that any centered
Gaussian vector has this property. Another example is given by symmetric $\alpha$-stable vectors
since their distributions can be represented as mixtures of Gaussian ones.

\begin{lem} 
Let $(Y(t))_{t\in T}$ be a stochastic process with $Y(t_0)=0$ a.s.\ for some 
$t_0\in T$. Furthermore, assume that $Y$ satisfies the Anderson property. 
Let $\tau>0$ and $\ell$ be a slowly varying function. Then we have
$$
- \log \pr{ \sup_{s,t\in T} |Y(s)-Y(t)| \leq \eps} \sim  \kappa \eps^{-\tau} \ell(\eps)
$$
if and only if 
$$
- \log \pr{ \sup_{t\in T} |Y(s)| \leq \eps} \sim  \kappa 2^{-\tau} \eps^{-\tau} \ell(\eps).
$$ 
\label{lem:anders} 
\end{lem}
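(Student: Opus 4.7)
My plan is to sandwich one of the two quantities against the other by a pair of inequalities that differ by a multiplicative constant tending to $2^{\tau}$ as a discretization parameter $N\to\infty$. Write
\[
A(\eps):=-\log\pr{\sup_{s,t\in T}|Y(s)-Y(t)|\le\eps},\qquad B(\eps):=-\log\pr{\sup_{t\in T}|Y(t)|\le\eps}.
\]
Two deterministic inequalities come for free. Since $Y(t_0)=0$, the event $\{\sup_{s,t}|Y(s)-Y(t)|\le\eps\}$ is contained in $\{\sup_t|Y(t)|\le\eps\}$, yielding $A(\eps)\ge B(\eps)$; and $\sup_{s,t}|Y(s)-Y(t)|\le 2\sup_t|Y(t)|$ yields $A(2\eps)\le B(\eps)$, equivalently $A(\eps)\le B(\eps/2)$. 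These already pin $B(\eps)$ between $\kappa 2^{-\tau}\eps^{-\tau}\ell(\eps)$ and $\kappa\eps^{-\tau}\ell(\eps)$, so only the correct multiplicative factor $2^{-\tau}$ remains at stake.

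The crux will be to tighten $A(\eps)\le B(\eps/2)$ via the Anderson property. On the event $\{\sup_{s,t}|Y(s)-Y(t)|\le\eps\}$ the midpoint $c^{\star}:=(\sup_t Y(t)+\inf_t Y(t))/2$ satisfies $\sup_t|Y(t)-c^{\star}|\le\eps/2$, and, crucially because $Y(t_0)=0$, also $|c^{\star}|\le\eps/2$. I will cover this interval by $N+1$ equispaced points $c_0,\ldots,c_N$ at spacing $\eps/N$; some $c_k$ then lies within $\eps/(2N)$ of $c^{\star}$, so $\sup_t|Y(t)-c_k|\le\eps(N+1)/(2N)$. Applying the Anderson property (\ref{anderson}) to the symmetric convex set $\{f:\sup|f|\le\eps(N+1)/(2N)\}$ with the constant shift $c_k$ shows that each of the $N+1$ summands in the resulting union bound is dominated by $\pr{\sup_t|Y(t)|\le\eps(N+1)/(2N)}$. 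Taking $-\log$ yields the key inequality
\[
B\!\left(\tfrac{(N+1)\eps}{2N}\right)\le A(\eps)+\log(N+1).
\]

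With this pair of inequalities in hand, both directions of the equivalence follow by a standard sandwich combined with slow variation of $\ell$. Assuming the asymptotic for $A$, the lower bound $B(\eps)\ge A(2\eps)$ gives $\liminf B(\eps)/(\eps^{-\tau}\ell(\eps))\ge \kappa 2^{-\tau}$, while the Anderson inequality after substitution gives $\limsup B(\eps)/(\eps^{-\tau}\ell(\eps))\le \kappa(2N/(N+1))^{-\tau}$; letting $N\to\infty$ makes the upper bound collapse to $\kappa 2^{-\tau}$. The converse direction is symmetric: assuming the asymptotic for $B$, one uses $A(\eps)\le B(\eps/2)$ for the upper bound and $A(\eps)\ge B((N+1)\eps/(2N))-\log(N+1)$ for the lower bound.

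The main obstacle is the discretization step, where one must verify that $Y(t_0)=0$ really forces $c^{\star}$ into the compact interval $[-\eps/2,\eps/2]$ so that an $O(N)$-term union bound suffices, and that the overhead $\log(N+1)$ is asymptotically negligible compared with the power-law rate $\eps^{-\tau}\ell(\eps)$. Once these are checked, sending $\eps\to 0$ first and $N\to\infty$ second cleanly recovers the sharp constant $2^{-\tau}$.
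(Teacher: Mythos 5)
Your proof is correct and follows essentially the same route as the paper's: center the process at the midpoint of its range (which $Y(t_0)=0$ confines to an interval of length of order $\eps$), discretize that interval, apply a union bound and the Anderson property to remove the shifts, and absorb the logarithmic overhead using the power-law rate together with slow variation of $\ell$. The only cosmetic difference is that you use a fixed number $N+1$ of grid points and a double limit ($\eps\to 0$ first, then $N\to\infty$), whereas the paper uses a lattice of spacing $\eps^{h}$ with $h>1$ fixed, so that the covering radius $\eps+\eps^{h}\sim\eps$ and a single limit suffices.
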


Note that the applicability of Lemma~\ref{lem:anders} depends on the use of the Anderson property. 
We now state that if $X$ satisfies the Anderson property so does $X\circ Y$. This makes 
it possible to use Theorem~\ref{thm:mainstrong} iteratively.

\begin{lem} \label{lem:anderson} 
Let $T$ be some non-empty index set and let $(X(u))_{u\in\R}$ and $(Y(t))_{t\in T}$ be 
independent stochastic processes, where $X$ satisfies the Anderson property. Then the process 
$(X(Y(t)))_{t\in T}$ satisfies the Anderson property. 
\end{lem}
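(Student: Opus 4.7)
The plan is to condition on $Y$ and push the Anderson property through the linear reindexing $\Phi_y(x)(t)=x(y(t))$, then average over $Y$. By independence of $X$ and $Y$, the conditional law of $X\circ Y$ given $Y=y$ is the image measure $\Phi_{y*}\mu$ on $\R^T$, where $\mu$ denotes the law of $X$ and $\Phi_y:\R^{\R}\to\R^T$ is the linear map above. The defining inequality for the Anderson property is stable under mixtures---simply integrate $\nu_y(A)\ge\nu_y(A+e)$ against $d\P_Y(y)$---so it suffices to prove that $\Phi_{y*}\mu$ satisfies the Anderson property on $\R^T$ for each deterministic $y$.

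Fix such $y$, a measurable symmetric convex $A\subseteq\R^T$, and $e\in\R^T$. The preimage $B:=\Phi_y^{-1}(A)$ is symmetric and convex in $\R^{\R}$ because $\Phi_y$ is linear and $A=-A$. In the easy case when $e$ lies in the image of $\Phi_y$---say $e=\Phi_y(e')$ for some $e'\in\R^{\R}$---one has the identity $\Phi_y^{-1}(A+e)=B+e'$, and the Anderson property of $X$ applied to the pair $(B,e')$ immediately yields $\mu(B+e')\le\mu(B)$, that is, $\Phi_{y*}\mu(A+e)\le\Phi_{y*}\mu(A)$.

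The main obstacle is the general case $e\notin\Phi_y(\R^{\R})$. Here I would exploit that $\Phi_{y*}\mu$ is supported on the subspace $V:=\Phi_y(\R^{\R})=\{z\in\R^T:z(t_1)=z(t_2)\text{ whenever }y(t_1)=y(t_2)\}$, so only the slices $A\cap V$ and $(A+e)\cap V$ contribute to the probabilities. Choosing an algebraic splitting $\R^T=V\oplus W$ and decomposing $e=e_V+e_W$, one checks that $(A+e)\cap V=((A+e_W)\cap V)+e_V$; the translation by $e_V\in V\subseteq\Phi_y(\R^{\R})$ is controlled by the easy case. What remains is a comparison, inside $V$, between the convex set $C:=(A+e_W)\cap V$ and the symmetric convex set $A\cap V$. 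The key ingredient is the Minkowski-type inclusion $(C-C)/2\subseteq A\cap V$, which follows from convexity together with $A=-A$ (given $v_1,v_2\in C$, the midpoint $(v_1-e_W+(-(v_2-e_W)))/2=(v_1-v_2)/2$ lies in $A\cap V$). Combining this relation with the Anderson property of $\Phi_{y*}\mu$ restricted to $V$---which, since every $v\in V$ lies in the image of $\Phi_y$, is itself a direct consequence of the easy case---yields $\Phi_{y*}\mu(C+e_V)\le\Phi_{y*}\mu(A\cap V)$, closing the argument.
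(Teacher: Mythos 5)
Your overall framework matches the paper's: condition on $Y$, reduce to a deterministic reindexing $\Phi_y$, and push the Anderson property of $X$ through that linear map, averaging at the end. Your ``easy case'' (when $e$ lies in the image of $\Phi_y$) is exactly the paper's argument, spelled out more carefully: the paper works with cylinder sets $A_{Y,e}$ and writes $\pr{X\in A_{Y,0}+(e(t_1),\dots,e(t_d))\mid Y}$, which amounts to lifting the finite-dimensional translation vector to an $e'\in\R^\R$ with $e'(Y(t_i))=e(t_i)$ and invoking Anderson for $X$ with the pair $(A_{Y,0},e')$. That lift exists precisely when the map $t_i\mapsto Y(t_i)$ separates the relevant coordinates, i.e.\ precisely in your easy case. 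So up to this point you are doing the same thing the authors do, just in more intrinsic language.

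Where your proposal parts company with the paper is the ``general case'' $e\notin\Phi_y(\R^\R)$, and there you have a genuine gap. Having established that $(A+e)\cap V=C+e_V$ with $C=(A+e_W)\cap V$, and that $(C-C)/2\subseteq A\cap V$, you assert that these two facts together with the Anderson property of $\Phi_{y*}\mu$ on $V$ yield $\Phi_{y*}\mu(C+e_V)\le\Phi_{y*}\mu(A\cap V)$. But the Anderson inequality \eqref{anderson}, as defined in the paper, only compares a \emph{symmetric} convex set $D$ with its translates $D+e$. Here $C$ is convex but in general \emph{not} symmetric, and $(C-C)/2\subseteq D$ does \emph{not} imply that $C$ is contained in any translate of $D$ --- already in $\R^2$ one can take $D$ a unit disc and $C$ an equilateral triangle of diameter $2$: then $(C-C)/2\subseteq D$, yet no translate of $D$ contains $C$, so Anderson cannot be applied via a monotonicity-plus-translation argument. (Moreover, $C$ really does arise as a slice of a symmetric convex body $B$ at height $e_W$, so this is not a vacuous worry.) The implication $(C-C)/2\subseteq D\Rightarrow\nu(C)\le\nu(D)$ is true for symmetric \emph{log-concave} $\nu$ (it is a one-line consequence of Pr\'ekopa--Leindler, $\nu\bigl(\tfrac12(C+(-C))\bigr)\ge\nu(C)^{1/2}\nu(-C)^{1/2}=\nu(C)$), and therefore for mixtures of centered Gaussians, which covers every example used later in the paper; but it is not a formal consequence of the abstract Anderson property and you give no argument for it. As written, the step ``yields $\Phi_{y*}\mu(C+e_V)\le\Phi_{y*}\mu(A\cap V)$, closing the argument'' is unjustified.

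To be fair, the paper's own proof quietly restricts itself to your easy case as well: the equality $\pr{X\in A_{Y,e}\mid Y}=\pr{X\in A_{Y,0}+(e(t_1),\dots,e(t_d))\mid Y}$ presupposes that the coordinate values $Y(t_1),\dots,Y(t_d)$ are distinct, so that the $\R^d$-translation lifts to a translation in $\R^\R$. For the processes treated in the paper this is an almost-sure event, but the lemma is stated for an arbitrary inner process $Y$. You were right to notice the degenerate case; what is missing is a correct way to handle it --- either an additional Pr\'ekopa--Leindler-type input (restricting to log-concave or Gaussian-mixture $X$), or a separate elementary argument, or an explicit nondegeneracy hypothesis on $Y$.
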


This shows that, in particular, iterated Brownian motion, the iteration of two (or more general $n$) 
fractional Brownian motions, $\alpha$-time Brownian motion (defined below), and many other non-Gaussian
processes satisfy the Anderson property.

\section{Proofs of the general results} \label{sec:proofs}
Before we prove Theorem~\ref{thm:main}, we recall a result that translates the small 
deviation probability into a corresponding result for the Laplace transform.

\begin{lem} 
Let $Y(0)=0$ almost surely, $p>0$, and $\tau>0$. Then 
$$
-\log \pr{ \sup_{t\in[0,1]} |Y(t)| \leq \eps} \approx \eps^{-\tau},\qquad \eps\to 0,
$$ 
implies 
$$
-\log \E \exp \left( - \lambda \sup_{s,t\in[0,1]} |Y(t)-Y(s)|^p \right) 
\approx \lambda^{1/(1+p/\tau)},\qquad \lambda\to \infty.
$$ 
The relation also holds if $\approx$ is replaced by $\preceq$ ($\succeq$ in the assertion) or 
$\succeq$ ($\preceq$ in the assertion), respectively. 
\label{lem:range}
\end{lem}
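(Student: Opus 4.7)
The plan is a standard Tauberian-type correspondence between the small deviation probability at scale $\eps$ and the Laplace transform at rate $\lambda$, executed with two elementary one-sided estimates plus a preliminary reduction. Throughout, write $M=\sup_{t\in[0,1]}|Y(t)|$ and $Z=\sup_{s,t\in[0,1]}|Y(s)-Y(t)|$. Since $Y(0)=0$ we have the pathwise inequalities $M\le Z\le 2M$, so
\[
\pr{Z\le\eps}\le \pr{M\le\eps}\le \pr{Z\le 2\eps},
\]
and the hypothesis $-\log\pr{M\le\eps}\approx \eps^{-\tau}$ transfers, unchanged in direction and in exponent, to $-\log\pr{Z\le\eps}\approx \eps^{-\tau}$ (and the same holds if one replaces $\approx$ by $\preceq$ or $\succeq$). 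From here on I work only with $Z$.

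For the lower bound on $\E e^{-\lambda Z^p}$ (an upper bound on the conclusion), truncate:
\[
\E e^{-\lambda Z^p}\ge e^{-\lambda\eps^p}\,\pr{Z\le\eps}.
\]
Taking $-\log$ and inserting the hypothesis bound $-\log\pr{Z\le\eps}\le C_1\eps^{-\tau}$ gives $-\log\E e^{-\lambda Z^p}\le \lambda\eps^p+C_1\eps^{-\tau}$. Minimizing over $\eps$ yields the critical scale $\eps_*\sim \lambda^{-1/(p+\tau)}$, at which both summands are of order $\lambda^{\tau/(p+\tau)}=\lambda^{1/(1+p/\tau)}$. This already produces the upper half of the assertion from just the $\preceq$ half of the hypothesis.

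For the upper bound on $\E e^{-\lambda Z^p}$, use the identity $e^{-\lambda z^p}=\int_{z^p}^{\infty}\lambda e^{-\lambda u}\,\d u$ and Fubini to write
\[
\E e^{-\lambda Z^p}=\int_0^{\infty}\lambda p\,z^{p-1}e^{-\lambda z^p}\pr{Z\le z}\,\d z.
\]
Split at a level $A>0$. The integral over $[0,A]$ is bounded by $\pr{Z\le A}\le \exp(-c_2 A^{-\tau})$, using the hypothesis in its $\succeq$ form; the tail over $[A,\infty)$ is bounded by $\int_A^{\infty}\lambda p z^{p-1}e^{-\lambda z^p}\,\d z=e^{-\lambda A^p}$. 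Balancing $\lambda A^p=c_2 A^{-\tau}$ picks out the same critical scale $A\sim \lambda^{-1/(p+\tau)}$ and makes both pieces of order $\exp(-c_3\lambda^{1/(1+p/\tau)})$. Consequently $-\log \E e^{-\lambda Z^p}\ge c_3\lambda^{1/(1+p/\tau)}-\log 2$.

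Nothing here is truly a hurdle; the only real bookkeeping is tracking which side of the inequality is used at each step, which I have deliberately separated above so that the $\approx$, $\preceq$, and $\succeq$ versions of the lemma all follow by the same pair of estimates. The passage from $M$ to $Z$ via the $Y(0)=0$ sandwich is the only place where the conversion between the small deviation functional and the oscillation functional is used, and this is why the constant $\kappa$ in (\ref{e:knownrange}) and the constant that naturally shows up in the small deviation rate differ only by the factor $2^{\tau}$ recorded in Lemma~\ref{lem:anders}.
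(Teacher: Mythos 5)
Your proof is correct, and it differs from the paper's in one substantive way: the paper reduces $\sup_t|Y(t)|$ to the oscillation $\sup_{s,t}|Y(s)-Y(t)|$ by exactly the same sandwich you use, but then it simply cites the de Bruijn Tauberian Theorem (Theorem 4.12.9 in Bingham--Goldie--Teugels) to pass from $-\log\pr{Z\le\eps}\approx\eps^{-\tau}$ to $-\log\E e^{-\lambda Z^p}\approx\lambda^{\tau/(p+\tau)}$. You instead re-prove the weak (order-of-magnitude) form of that Tauberian correspondence from scratch: a one-line truncation $\E e^{-\lambda Z^p}\ge e^{-\lambda\eps^p}\pr{Z\le\eps}$ optimized at the critical scale $\eps_*\sim\lambda^{-1/(p+\tau)}$ for the lower bound on the Laplace transform, and an integration-by-parts/layer-cake split at the same scale for the upper bound. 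This buys a fully self-contained and elementary argument that isolates exactly which one-sided hypothesis feeds which one-sided conclusion, at the cost of not automatically giving the sharp constants that de Bruijn delivers (not needed here, since the lemma only claims $\approx$). One small remark: your bookkeeping of directions shows that a $\preceq$ hypothesis yields a $\preceq$ conclusion and $\succeq$ yields $\succeq$; the parenthetical in the lemma's statement, which pairs $\preceq$ with $\succeq$ and vice versa, appears to be a misprint, and both your argument and a direct appeal to de Bruijn confirm the matching-direction version.
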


\begin{proof} 
This follows simply from the fact that 
$$ \frac{1}{2}\, \sup_{s,t\in[0,1]} |Y(t)-Y(s)| \leq \sup_{t\in[0,1]}|Y(t)|  
=  \sup_{t\in[0,1]} |Y(t)-Y(0)| \leq  \sup_{s,t\in[0,1]} |Y(t)-Y(s)|.
$$ 
and the de Bruijn Tauberian Theorem (Theorem~4.12.9 in \cite{bgt}).\end{proof}

Now we can prove Theorem~\ref{thm:main}.

\begin{proof}[ of Theorem~\ref{thm:main}] 
By assumption, for some constants $C_1,C_1', C_2,C_2'>0$ and {\it all} $\eps>0$, 
\begin{equation}
C_1' \, e^{- C_1 \eps^{-\theta}} \leq \pr{ \sup_{t\in[0,1]} |X(t)| \leq \eps} 
\leq C_2' \, e^{-C_2 \eps^{-\theta}}. 
\label{eqn:sballx}
\end{equation}
Let 
$$
N:=\inf_{t\in[0,1]} Y(t)\qquad\text{and}\qquad M:=\sup_{t\in[0,1]} Y(t).
$$
Note that, since $Y$ is continuous, 
$$Y([0,1]) = [N,M].$$ 
Therefore, by independence of $X$ and $Y$ and by independence of $X$ for positive and 
negative arguments, we have
\begin{multline}
\pr{ \sup_{t\in[0,1]} |X(Y(t))| \leq \eps} 
= \pr{ \sup_{s\in [N,0]} |X(s)| \leq \eps, \sup_{s\in[0,M]} |X(s)| \leq \eps} 
\label{eqn:crit1}
\\ 
= \E\left[  \pr{ \left. \sup_{s\in[N,0]} |X(s)| \leq \eps\right|Y} 
            \pr{ \left.\sup_{s\in[0,M]} |X(s)| \leq \eps \right| Y } 
     \right].
\end{multline}
Now we use the $H$-self-similarity of $X$ to see that the last expression equals
\begin{equation} 
  \E\left[  \pr{ \left. \sup_{s\in[0,1]} |(-N)^H X(s)| \leq \eps\right|Y} 
             \pr{ \left.\sup_{s\in[0,1]} |M^H X(s)| \leq \eps \right| Y } \right].
\label{eqn:condid}
\end{equation}
By (\ref{eqn:sballx}), we have 
$$ \pr{ \left. \sup_{s\in[0,1]} |(-N)^H X(s)| \leq \eps\right|Y} 
=  \pr{ \left. \sup_{s\in[0,1]} |X(s)| \leq \frac{\eps}{(-N)^H}\right|Y}
\leq C_2' e^{-C_2 \eps^{-\theta} (-N)^{H \theta}}.
$$
Analogously one can argue for the second term in (\ref{eqn:condid}), 
which yields that the whole expression in (\ref{eqn:condid}) is less than 
$$
C_2'^2 \E e^{-C_2 \eps^{-\theta} ( (-N)^{H \theta} + M^{H \theta})} 
\leq C_2'^2  \E e^{-\tilde{C}_2 \eps^{-\theta} ( M - N)^{H \theta}} 
= C_2'^2 \E e^{-\tilde{C}_2 \eps^{-\theta} (\sup_{s,t\in[0,1]} |Y(t)-Y(s)|)^{H \theta}}.
$$ 
By Lemma~\ref{lem:range}, the logarithmic order of this Laplace transform, when $\eps\to 0$, is 
$\eps^{-\theta/(1+H\theta/\tau)}$, which proves the upper bound in the assertion. 
The lower bound is established in exactly the same way using the lower bound in (\ref{eqn:sballx}).  
Note that this argument fails when $Y$ is not continuous, because we only have 
$Y([0,1]) \subsetneq [N,M]$.
\end{proof}

Now let us prove the strong asymptotics result.

\begin{proof}[ of Theorem~\ref{thm:mainstrong}]
Let $\delta>0$. By assumption, for all $0<\eps<\eps_0=\eps_0(\delta)$, 
$$
e^{- k (1+\delta) \eps^{-\theta}} \leq \pr{ \sup_{t\in[0,1]} |X(t)| \leq \eps} 
\leq e^{- k (1-\delta) \eps^{-\theta}}.
$$
This implies that there are constants $C_1, C_2>0$ (depending on $\eps_0$) 
such that for {\it all} $\eps>0$,
\begin{equation}
C_1 e^{- k (1+\delta) \eps^{-\theta}} \leq \pr{ \sup_{t\in[0,1]} |X(t)| \leq \eps} 
\leq C_2 e^{- k (1-\delta) \eps^{-\theta}}. 
\label{eqn:sballxstrong}
\end{equation}
By repeating the previous proof with (\ref{eqn:sballx}) replaced by (\ref{eqn:sballxstrong}),
we arrive at
\[
\pr{ \sup_{t\in[0,1]} |X(Y(t))| \leq \eps} \le
C_2^2 \E e^{-k(1-\delta) \eps^{-\theta} ( (-N)^{H \theta} + M^{H \theta})}.
\]
By using the assumption $H \theta=1$, we clearly have
$$
C_2^2 \E e^{-k(1-\delta) \eps^{-\theta} ( (-N)^{H \theta} + M^{H \theta})} 
= C_2^2  \E e^{-k(1-\delta) \eps^{-\theta} ( M - N)} 
= C_2^2 \E e^{-k(1-\delta) \eps^{-\theta} \sup_{s,t\in[0,1]} |Y(t)-Y(s)|}.
$$ 
Next, by the de Bruijn Tauberian Theorem 
(Theorem~4.12.9 in \cite{bgt}), the strong asymptotic logarithmic order of this Laplace transform, 
when $\eps\to 0$, is 
\[
\kappa^{1/(1+\tau)} \tau^{-1/(1+1/\tau)} (1+\tau) (k(1-\delta))^{1/(1+1/\tau)} 
\eps^{-\theta/(1+1/\tau)}.
\] 
Letting $\delta\to 0$ proves the upper bound in the assertion. 
The lower bound follows in exactly the same way using the lower bound in (\ref{eqn:sballxstrong}).  
As in the previous theorem, the proof fails when $Y$ is not continuous, because we only have 
$Y([0,1])\subsetneq  [N,M]$.
\end{proof}

\begin{proof}[ of Lemma~\ref{lem:anders}] Clearly, 
$$
\sup_{s,t} |Y(s)-Y(t)| \leq 2 \sup_t |Y(t)|
$$ 
and therefore 
$$
- \log \pr{ \sup_{s,t} |Y(s)-Y(t)| \leq 2\eps} \leq - \log \pr{ \sup_{t} |Y(t)| \leq \eps},
$$ 
which implies the inequality in one direction.

On the other hand, let $N:=\inf_{t} Y(t)$ and $M:=\sup_{t} Y(t)$. 
Fix $h>1$ and $0<\eps<1$.

Assume 
\begin{equation} 
\label{e:0418star} 
  \sup_{s,t} |Y(s)-Y(t)|\leq 2\eps. 
\end{equation} 
Then since $Y(t_0)=0$ we have $M\leq 2\eps$ and $N\geq -2\eps$.
Moreover, $Q:=\frac{M+N}2$ must satisfy $|Q|\leq \eps$.
 Furthermore, we have
$$
 \sup_{s} \left|Y(s)- Q\right|\leq \max\{M-Q, Q-N\}=\frac{M-N}2\le\eps.
$$

Let $m$ be the point in $\lbrace k\eps^h,k\in\Z\rbrace$ closest to $Q$.  There are only 
$\leq 2(\eps+\eps^h)/\eps^h$ possible values for $m$. Additionally, we have 
$$
\sup_s |Y(s)-m| = \sup_s \left|Y_s - Q + Q - m\right|\leq \eps + \eps^h.
$$

Therefore, 
\begin{eqnarray*} && \pr{ \sup_{t,s} |Y(s)-Y(t)| \leq 2\eps}
\\
&=&   \sum_{\lbrace k\in\Z,\, |k \eps^h|\leq \eps+\eps^h\rbrace}  
      \pr{ \sup_{t,s} |Y(s)-Y(t)| \leq 2\eps, m=k \eps^h}
\\
&\leq& \sum_{\lbrace k\in\Z,\, |k \eps^h|\leq \eps+\eps^h\rbrace}  
       \pr{ \sup_{s} |Y(s)-m| \leq \eps+\eps^h, m=k \eps^h}
\\
&\leq& \sum_{\lbrace k\in\Z,\, |k \eps^h|\leq \eps+\eps^h\rbrace}  
        \pr{ \sup_{s} |Y(s)-k \eps^h| \leq \eps+\eps^h}
\\
&\leq& \sum_{\lbrace k\in\Z,\, |k \eps^h|\leq \eps+\eps^h\rbrace}  
        \pr{ \sup_{s} |Y(s)| \leq \eps+\eps^h}
\\
&\leq& \frac{2(\eps+\eps^h)}{\eps^h}\,  \pr{ \sup_{s} |Y(s)| \leq \eps+\eps^h}
\\
&\leq & 4\eps^{1-h}  \pr{ \sup_{s} |Y(s)| \leq \eps+\eps^h},
\end{eqnarray*}
where we used the Anderson property in the fourth step.

Taking logarithms, multiplying with $-(2\eps)^\tau \ell(2\eps)^{-1}$, taking limits, 
and using that $\ell$ is a slowly varying function implies that 
\begin{eqnarray*} 
 && \lim_{\eps\to 0}  \eps^{\tau} \ell(\eps)^{-1}
   \left( - \log \pr{ \sup_{s,t} |Y(s)-Y(t)| \leq \eps} \right)
 \\
 &\geq& 2^\tau \lim_{\eps\to 0} \eps^\tau \ell(\eps)^{-1} 
   \left(-\log \pr{ \sup_s |Y(s)| \leq \eps + \eps^h} \right) 
 \\
 &=& 2^\tau \lim_{\eps\to 0} \left(\frac{\eps}{\eps+\eps^h}\right)^\tau (\eps+\eps^h)^\tau 
     \ell(\eps+\eps^h)^{-1}\left(-\log \pr{ \sup_s |Y(s)| \leq \eps + \eps^h} \right)
 \\
 &=& 2^\tau \lim_{\eps\to 0} \eps^\tau \ell(\eps)^{-1}
     \left(-\log \pr{ \sup_s |Y(s)| \leq \eps} \right),
 \end{eqnarray*} 
 which finishes the proof.

\end{proof}

\begin{proof}[ of Lemma~\ref{lem:anderson}] 
It is sufficient to check (\ref{anderson}) for cylindric sets. 
Fix $d\geq 1$ and let $B$ be a symmetric convex set in $\R^d$. 
Let $t_1, \ldots, t_d\in T$ and fix any function $e:T\to \R^1$. Define a cylinder 
\[
A:=\left\{ a:T\to \R:\, (a(t_1), \ldots, a(t_d))\in B\right\}
\]
and the corresponding random cylinders
\[
A_{Y,e}:=\left\{ f:\R\to \R:\, (f(Y(t_1))-e(t_1), \ldots, f(Y(t_d))-e(t_d)\in B\right\}.
\]
Then we have
\begin{eqnarray*}
 \pr{ X\circ Y \in A+e} 
 &=& \pr{(X(Y(t_1))-e(t_1), \ldots, X(Y(t_d))-e(t_d))\in B}
 \\
 &=&\E \, \pr{ X \in A_{Y,e}|Y} 
 =\E \, \pr{ X \in A_{Y,0} +(e(t_1), \ldots, e(t_d))|Y}
 \\
 &\le& \E \, \pr{ X \in A_{Y,0}|Y} 
= \pr{ X\circ Y \in A}.
\end{eqnarray*}

\end{proof}

\section{Examples} \label{sec:examples}
\subsection{Iterated Brownian motions}
As a first example let us consider $n$-iterated Brownian motions: 
\begin{equation} 
X^{(n)}(t):= X_n(X^{(n-1)}(t)),\qquad X^{(1)}(t)=X_1(t), 
\label{eqn:nbm} 
\end{equation} where the $X_i$ are independent (two-sided) Brownian motions. This process is $2^{-n}$-self-similar. 
The small deviation problem can be solved by using $(n-1)$-times 
Theorem~\ref{thm:mainstrong} (and Lemmas~\ref{lem:anders} and~\ref{lem:anderson}): 
$$
- \log \pr{ \sup_{t\in[0,1]} |X^{(n)}(t)| \leq \eps} 
\sim \frac{\pi^2}{8}\, d_n\, \eps^{-\tau_n},
$$
where $d_1:=1$ and 
\begin{equation}\label{eqn:dntaun}
d_n:=d_{n-1}^{1/(1+\tau_{n-1})} (2/\tau_{n-1})^{\tau_{n-1}/(1+\tau_{n-1})}(1+\tau_{n-1}),
   \qquad \tau_n=1/(1-2^{-n}).
\end{equation}
An explicit calculation yields:  $$(\tau_n)=\left(2,\frac{4}{3},\frac{8}{7},\frac{16}{15},\frac{32}{31},\ldots\right),$$
$$(d_n)=\left(1,3,\frac{7}{2^{4/7}},\frac{15}{2 \cdot 2^{1/3}},\frac{31}{4 \cdot 2^{6/31}},\ldots\right).$$
By induction one can prove that 
$$
d_n=\frac{2^n-1}{2^{n-3}\cdot 2^{(n+1)/(2^n-1)}}\, ,\qquad n\geq 1.
$$
Indeed, by using $1+\tau_{n-1}=\frac{2^n-1}{2^{n-1}-1}$, we obtain
\begin{eqnarray*}
d_n^{1+\tau_{n-1}} &=&
d_{n-1} (2/\tau_{n-1})^{\tau_{n-1}}(1+\tau_{n-1})^{1+\tau_{n-1}}
\\
&=& \frac{2^{n-1}-1}{2^{n-4}\cdot 2^{n/(2^{n-1}-1)}} \
   \left(\frac{2^{n-1}-1}{2^{n-2}}\right)^{\frac{2^{n-1}}{2^{n-1}-1}} \
   \frac {(2^n-1)^{1+\tau_{n-1}}}{(2^{n-1}-1)^{\frac{2^n-1}{2^{n-1}-1}}}
\\
&=&    (2^{n-1}-1)^{1+\frac{2^{n-1}}{2^{n-1}-1}- \frac{2^{n}-1}{2^{n-1}-1}} \
       (2^n-1)^{1+\tau_{n-1}} \
       2^{-\left[n-4+ \frac{n}{2^{n-1}-1}+(n-2)\frac{2^{n-1}}{2^{n-1}-1} \right]}
\\
&=&
(2^n-1)^{1+\tau_{n-1}} \
       2^{- \frac{n2^{n}-2^{n+1}-2^n+4}{2^{n-1}-1}}\ .
\end{eqnarray*}
Using the identity $n2^{n}-2^{n+1}-2^n+4=(n-3)(2^n-1)+n+1$, we arrive at
\[
d_n=(2^n-1) \
       2^{- \frac{(n-3)(2^n-1)+n+1}{2^{n}-1}}=(2^n-1) \
       2^{-(n-3)- \frac{n+1}{2^{n}-1}}\ ,
\]
as required in (\ref{eqn:dntaun}).

We summarize:
\begin{cor} 
Let $X^{(n)}$ be the process given by $(\ref{eqn:nbm})$, where the $X_i$ are 
independent two-sided Brownian motions. Then 
$$
- \log \pr{ \sup_{t\in[0,1]} |X^{(n)}(t)| \leq \eps} 
\sim \pi^2\, \frac{1-2^{-n}}{2^{(n+1)/(2^n-1)}} \ \eps^{-1/(1-2^{-n})}.
$$
\end{cor}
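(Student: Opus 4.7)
The plan is a straightforward induction on $n$ that iteratively applies Theorem~\ref{thm:mainstrong} to the decomposition $X^{(n)} = X_n \circ X^{(n-1)}$. At each stage the outer Brownian motion $X_n$ plays the role of the $H$-self-similar process $X$, with $H = 1/2$, $\theta = 2$, and $k = \pi^2/8$ from Chung's classical small deviation result for Brownian motion (and $\theta H = 1$, as required by the theorem); the inner iterate $X^{(n-1)}$ plays the role of $Y$. The base case $n = 1$ is exactly Chung's theorem, and substituting $n = 1$ into the asserted formula reproduces $\frac{\pi^2}{8}\,\eps^{-2}$, confirming $d_1 = 1$ and $\tau_1 = 2$.

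For the inductive step three items must be carried along. First, $X^{(n-1)}$ needs a continuous modification, which is immediate since each $X_i$ has continuous sample paths and composition preserves continuity. Second, $X^{(n-1)}$ must satisfy the Anderson property so that Lemma~\ref{lem:anders} applies; this follows by iterating Lemma~\ref{lem:anderson}, since each outer $X_i$ is centered Gaussian and hence Anderson. Third, the inductive hypothesis -- stated for the sup-norm -- has to be translated into the range asymptotics required by Theorem~\ref{thm:mainstrong}; this is precisely what Lemma~\ref{lem:anders} does, yielding $\kappa_{n-1} = \frac{\pi^2}{8}\, d_{n-1}\, 2^{\tau_{n-1}}$.

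Feeding these inputs into Theorem~\ref{thm:mainstrong} gives the new exponent $2\tau_{n-1}/(1+\tau_{n-1})$, which the elementary identity $\tau_{n-1} = 2^{n-1}/(2^{n-1}-1)$ collapses to $1/(1-2^{-n}) = \tau_n$. Matching constants on both sides of the resulting strong equivalence and cancelling a factor $\pi^2/8$ produces precisely the recurrence~(\ref{eqn:dntaun}), whose closed form $d_n = (2^n-1)/(2^{n-3}\cdot 2^{(n+1)/(2^n-1)})$ is already verified in the excerpt. A final algebraic simplification,
$$\frac{\pi^2}{8}\, d_n = \pi^2\,(2^n-1)\cdot 2^{-n}\cdot 2^{-(n+1)/(2^n-1)} = \pi^2\,\frac{1-2^{-n}}{2^{(n+1)/(2^n-1)}},$$
produces the asserted constant.

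The main obstacle is really just the bookkeeping that propagates continuity and the Anderson property through each composition, so that Lemma~\ref{lem:anders} and Theorem~\ref{thm:mainstrong} remain applicable at every step; once that is dispatched via Lemma~\ref{lem:anderson}, the corollary reduces to solving the explicit recurrence~(\ref{eqn:dntaun}) with $d_1 = 1$, which the excerpt already carries out. No ingredient beyond Chung's constant and the general machinery of Section~\ref{sec:general} is needed.
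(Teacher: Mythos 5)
Your proposal is correct and follows exactly the route the paper sketches: iterate Theorem~\ref{thm:mainstrong} $(n-1)$ times on the decomposition $X^{(n)} = X_n \circ X^{(n-1)}$, using Lemma~\ref{lem:anderson} to carry the Anderson property through each composition and Lemma~\ref{lem:anders} to convert the sup-norm inductive hypothesis into the range asymptotics that Theorem~\ref{thm:mainstrong} requires. The bookkeeping you spell out (Chung's constant $k=\pi^2/8$ with $\theta=2$, $H=1/2$, $\theta H = 1$; the translation $\kappa_{n-1} = \frac{\pi^2}{8}d_{n-1}2^{\tau_{n-1}}$; the exponent collapsing to $\tau_n$; the recurrence~(\ref{eqn:dntaun}) and its closed form) is precisely what the paper implicitly carries out, so this is the same argument, merely written out in full detail rather than compressed into the recurrence.
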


\begin{figure}[ht]\centering\includegraphics[scale=0.6]{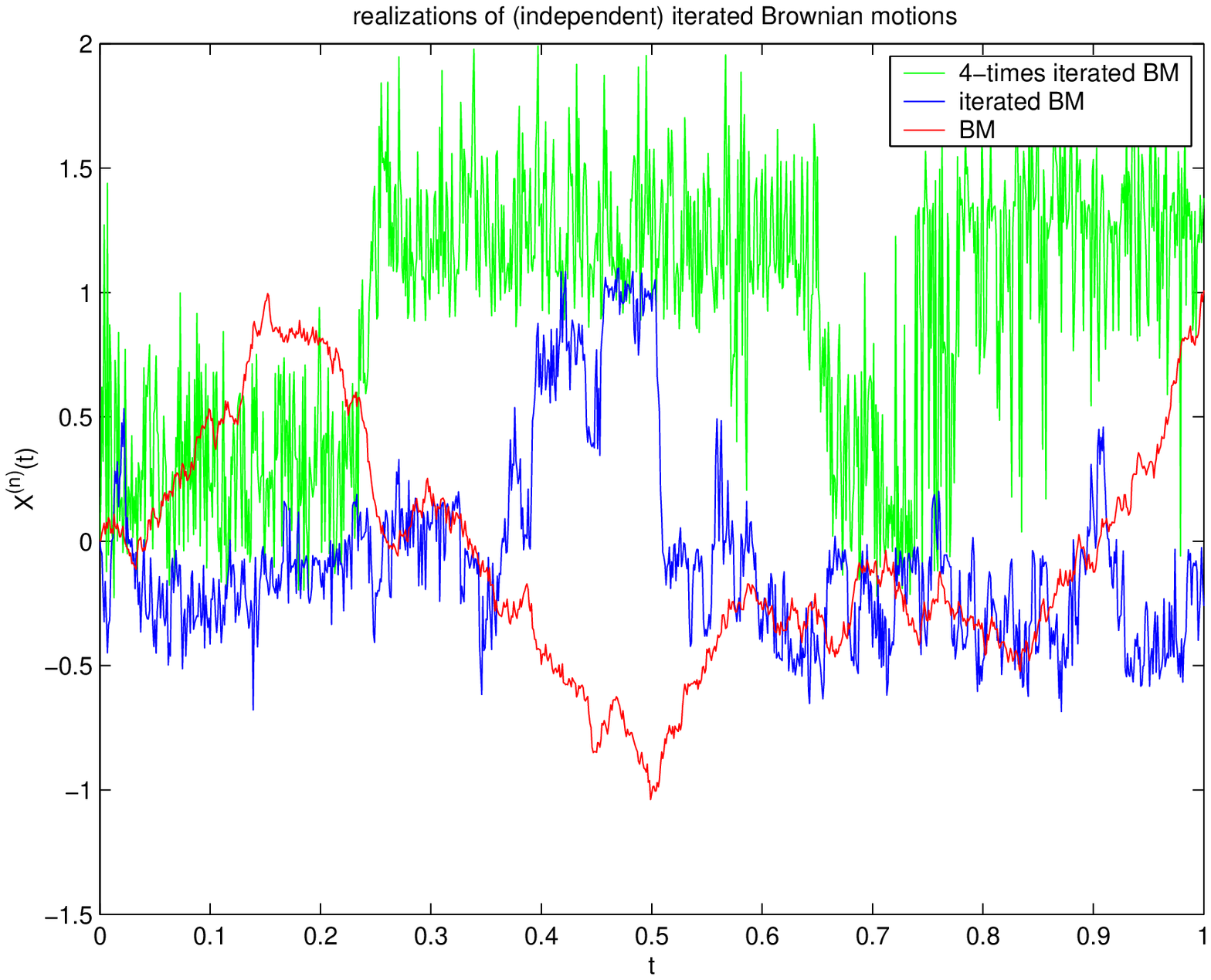}

Figure 1: typical sample paths of iterated Brownian motions\end{figure}

\subsection{Iterated two-sided fractional Brownian motions}
More generally, one can consider $n$-iterated fractional Brownian motions, given by 
(\ref{eqn:nbm}), where this time $X_1,\ldots, X_n$ are independent  (two-sided) fractional 
Brownian motions with Hurst parameters $H_1, \ldots, H_n$, respectively. The process 
$X^{(n)}$ is $H_1\cdot \ldots \cdot H_n$-self-similar. Its small deviation order is 
given by 
\begin{equation}
\label{eqn:h1hn}
- \log \pr{ \sup_{t\in[0,1]} |X^{(n)}(t)| \leq \eps} \sim c_n \eps^{-\tau_n},\qquad 
  \text{where $\frac{1}{\tau_n}=\sum_{j=1}^n H_j\cdot \ldots\cdot H_n$}
\end{equation} 
and $c_n$ is defined iteratively by 
$$ 
c_n:=(1+\tau_{n-1})\left[ c_{n-1}^{1/\tau_{n-1}}\,\frac{2 c(H_n)}{\tau_{n-1}}   
                   \right]^{\tau_{n-1}/(1+\tau_{n-1})}  ,
   \qquad c_1=c(H_1),
$$ 
and $c(H)$ is the small deviation constant of fractional Brownian motion with 
Hurst parameter $H$.

Even for $n=2$, i.e.\ fractional Brownian motions $X_1$ and $X_2$ with Hurst 
pa\-ra\-meters $H_1$, $H_2$, respectively, this leads to the new result that 
the small deviation order is 
$$
- \log \pr{ \sup_{t\in[0,1]} |X_2(X_1(t))| \leq \eps} 
\\ \sim \left(1 +\frac{1}{H_1}\right) 
\left[ c(H_1)^{H_1}\, 2 H_1 c(H_2)\right]^{1/(1+H_1)} \eps^{-\frac{1}{H_2(1+H_1)}}.
$$ 
In particular, for $H_1=H_2=:H$, i.e.\ iterated fractional Brownian motion, we get 
$$
- \log \pr{ \sup_{t\in[0,1]} |X_2(X_1(t))| \leq \eps} 
\sim c(H) \left( 2 H \right)^{1/(1+H)} \left(1 +\frac{1}{H}\right)  \eps^{-1/(H(1 + H))}.
$$

\subsection{The `true' iterated fractional Brownian motion}
Note that in the last subsection we obtained the small deviation order for `iterated 
fractional Brownian motion' $X\circ Y$, where $X$ was a {\it two-sided} fractional 
Brownian motion, i.e.\ a process consisting of two independent branches for positive 
and negative arguments, and $Y$ was another fractional Brownian motion (independent 
of the two branches of $X$). We shall now calculate the small deviation order for the 
`true' iterated fractional Brownian motion, namely, using $Y$ as above but $X$ being
a centered Gaussian process on $\R$ with covariance 
\begin{equation}
\E X(t)X(s)= \frac 12 \left(|s|^{2H}+|t|^{2H}-|t-s|^{2H}\right),\qquad t,s\in\R.
\label{eqn:truefbm}
\end{equation}

The general result is as follows.

\begin{thm}
Let $X$ be a fractional Brownian motion with Hurst index $H$ as given in $(\ref{eqn:truefbm})$ 
and $Y$ be a continuous process independent of $X$ satisfying 
\begin{equation}
-\log \pr{ \sup_{t\in[0,1]} |Y(t)-Y(s)|\leq \eps} \sim \kappa \eps^{-\tau}.
\label{e:tobechecked}
\end{equation} 
Then 
$$
-\log \pr{ \sup_{t\in[0,1]} |X(Y(t))|\leq \eps} 
\sim \kappa^{1/(1+\tau)} \tau^{-\tau/(1+\tau)} (1+\tau) c(H)^{\tau/(1+\tau)} 
\eps^{-\tau/(H(1+\tau))},
$$ 
where $c(H)$ is the small deviation constant of fractional Brownian motion. 
\label{thm:truefbm}
\end{thm}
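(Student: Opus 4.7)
I intend to follow the scheme of the proof of Theorem~\ref{thm:mainstrong}, but since the classical fBm $X$ of $(\ref{eqn:truefbm})$ does \emph{not} have independent wings at $0$, I cannot factor the conditional small-ball probability on $[N,M]$ into a product over $[N,0]$ and $[0,M]$. The substitute is the \emph{stationarity of increments} of $X$, combined with the Anderson property in the Khatri--Šidák form. As in the proof of Theorem~\ref{thm:mainstrong} I condition on $Y$ and use continuity of $Y$ to write $\sup_{t\in[0,1]} |X(Y(t))|=\sup_{s\in[N,M]}|X(s)|$ with $N=\inf_{t\in[0,1]}Y(t)$ and $M=\sup_{t\in[0,1]}Y(t)$. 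The task reduces to establishing, sufficiently uniformly in $Y$, that $-\log\pr{\sup_{s\in[N,M]}|X(s)|\leq\eps\mid Y}=(c(H)+o(1))(M-N)\eps^{-1/H}$, after which taking expectation and applying the de Bruijn Tauberian theorem as in the proof of Theorem~\ref{thm:mainstrong} yields the claimed strong asymptotic.

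For the upper bound on the conditional probability I pass to the range via $\sup_{s\in[N,M]}|X(s)|\geq\frac12\sup_{s,t\in[N,M]}|X(s)-X(t)|$, and stationarity of increments gives $\sup_{s,t\in[N,M]}|X(s)-X(t)|\stackrel{d}{=}\sup_{s,t\in[0,M-N]}|X(s)-X(t)|$. Lemma~\ref{lem:anders} applied to fBm (with $\tau=1/H$, $\kappa=c(H)$), together with self-similarity, furnishes $-\log\pr{\sup_{s,t\in[0,T]}|X(s)-X(t)|\leq\eps}\sim c(H)\,2^{1/H}T\eps^{-1/H}$. The factor $2^{1/H}$ cancels against $(2\eps)^{-1/H}$, leaving, uniformly in $\eps>0$, $\pr{\sup_{s\in[N,M]}|X(s)|\leq\eps\mid Y}\leq C\exp(-c(H)(1-\delta)(M-N)\eps^{-1/H})$. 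Taking expectation and invoking de Bruijn, then sending $\delta\to 0$, gives the lower bound on $-\log\pr{\sup_t|X(Y(t))|\leq\eps}$.

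For the lower bound I fix $\alpha>1$ and use the decomposition $X(s)=X(N)+(X(s)-X(N))$ with the triangle inclusion
\begin{equation*}
\{|X(N)|\leq\eps^\alpha\}\cap\{\sup_{s\in[N,M]}|X(s)-X(N)|\leq\eps-\eps^\alpha\}\subseteq\{\sup_{s\in[N,M]}|X(s)|\leq\eps\}.
\end{equation*}
Conditional on $Y$, both left-hand events are symmetric convex in the Gaussian space of $X$, so the Khatri--Šidák inequality gives
\begin{equation*}
\pr{\sup_{s\in[N,M]}|X(s)|\leq\eps\mid Y}\geq\pr{|X(N)|\leq\eps^\alpha\mid Y}\cdot\pr{\sup_{r\in[0,M-N]}|X(r)|\leq\eps-\eps^\alpha},
\end{equation*}
the second factor obtained by stationarity of increments. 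After truncating $\{|N|\leq K\}$, the first factor is bounded below by a constant times $\eps^\alpha$ and contributes only $O(\log\eps^{-1})$ to $-\log$, whereas the second is bounded below by $\exp(-c(H)(1+\delta)(M-N)(\eps-\eps^\alpha)^{-1/H})$, with $(\eps-\eps^\alpha)^{-1/H}\to\eps^{-1/H}$ since $\alpha>1$. Taking expectation, applying de Bruijn once more, and finally letting $K\to\infty$, $\delta\to 0$, $\alpha\downarrow 1$, produces the matching upper bound on $-\log\pr{\sup_t|X(Y(t))|\leq\eps}$.

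The principal technical delicacy is the choice of anchor tolerance $\eps^\alpha$ with $\alpha>1$ rather than the symmetric $\eps/2,\eps/2$ one would use in a routine triangle-inequality split. A symmetric split would produce $(\eps/2)^{-1/H}=2^{1/H}\eps^{-1/H}$ in the exponent and mismatch the upper bound by a spurious factor $2^{1/H}$; with the asymmetric split the anchor cost is only polynomial in $\eps$ (hence logarithmically negligible), while the dominant exponential factor is undisturbed to leading order. Verifying that these $\log$-level errors, the truncation error in $\{|N|\leq K\}$, and the $\delta,\alpha$ limits interchange cleanly with the de Bruijn step is the main bookkeeping obstacle.
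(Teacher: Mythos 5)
Your proposal is correct and the core of it matches the paper's proof: condition on $Y$, reduce to fBm on the random interval $[N,M]$ (using continuity of $Y$), anchor at $X(N)$, invoke stationarity of increments to shift to $[0,M-N]$, and apply a Gaussian correlation inequality to decouple the anchor event from the increment event; then take expectation over $Y$ and finish with the de Bruijn Tauberian theorem. The paper packages the conditional-probability sandwich into its Lemma~\ref{lem:truefbm}, using Li's weak correlation inequality from \cite{licorrelation} with an $\eps$--$\delta\eps/2$ split, where you use Khatri--\v{S}id\'ak with an $\eps-\eps^\alpha$ versus $\eps^\alpha$ split; both give an anchor contribution that is polynomial in $\eps$, hence negligible at the $-\log$ level, so this is a cosmetic difference.

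Where you genuinely diverge is the step after the decoupling: once conditioned on $Y$, both the increment probability and the anchor probability are random through $N$ and $M$, and the expectation of their product has to be controlled. The paper first \emph{worsens} the anchor bound (replacing $|N|^H$ by $(M+|N|)^H$) so that both factors become non-increasing functions of $M+|N|$, and then applies the FKG inequality to split $\E[fg]\geq \E f\cdot \E g$, after which the anchor expectation is shown to be of order $\eps$. You instead truncate on $\{|N|\leq K\}$ and bound the anchor probability below by a deterministic $c_K\,\eps^\alpha$, which removes the coupling entirely; the price is the truncation error $\E[\indi{|N|>K}\,\pr{\sup_{[0,M-N]}|X|\leq\eps\mid Y}]$, which you do not spell out but which is easy to dominate: on $\{|N|>K\}$ one has $M-N>K$, so the conditional probability is $\leq C\,e^{-c(H)(1-\delta)K\eps^{-1/H}}$, while the target expectation is of order $e^{-c'\eps^{-\tau/(H(1+\tau))}}$ with $\tau/(H(1+\tau))<1/H$, so the error is of smaller exponential order for any fixed $K$. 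In fact, because this works for each fixed $K$, your concluding passage ``letting $K\to\infty$, $\alpha\downarrow 1$'' is superfluous — only $\delta\to 0$ is needed. Your route is slightly more elementary (no FKG, no monotone-rearrangement trick), at the cost of one extra comparison estimate; the paper's route avoids truncation bookkeeping but uses a less elementary inequality. Both are sound.
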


This theorem can be applied to many processes $Y$. We recall that (\ref{e:tobechecked}) 
can be obtained e.g.\ via Lemma~\ref{lem:anders} from the small deviation order. 
In particular, if $Y$ is also a fractional Brownian motion we get the following result 
for the `true' iterated Brownian motion.

\begin{cor}  
Let $X$ be a fractional Brownian motion with Hurst index $H=H_2$ as given 
in $(\ref{eqn:truefbm})$ and $Y$ be a (continuous modification of a) fractional 
Brownian motion with Hurst index $H_1$ (independent of $X$). Then 
$$
-\log \pr{ \sup_{t\in[0,1]} |X(Y(t))|\leq \eps} 
\sim \left(1 +\frac{1}{H_1}\right) \left[ c(H_1)^{H_1}\, 2 H_1 c(H_2)\right]^{1/(1+H_1)} 
\eps^{-\frac{1}{H_2(1+H_1)}}.$$
\end{cor}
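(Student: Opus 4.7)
The plan is to apply Theorem~\ref{thm:truefbm} with $Y$ being fractional Brownian motion of Hurst index $H_1$. The only thing we need is to identify the constants $\kappa$ and $\tau$ appearing in the range small deviation assumption (\ref{e:tobechecked}) for this particular $Y$.

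First I would invoke the well-known small deviation asymptotics for fractional Brownian motion:
\[
-\log \pr{ \sup_{t\in[0,1]} |Y(t)| \leq \eps } \sim c(H_1)\, \eps^{-1/H_1},
\]
with the constant $c(H_1)$ as in the statement of the corollary. Since $Y$ is centered Gaussian, it satisfies the Anderson property, so Lemma~\ref{lem:anders} applies with $\tau = 1/H_1$ and trivial slowly varying function $\ell \equiv 1$. Reading off the constant via the translation formula $\kappa\, 2^{-\tau} = c(H_1)$ gives
\[
-\log \pr{ \sup_{s,t\in[0,1]} |Y(t)-Y(s)| \leq \eps } \sim c(H_1)\, 2^{1/H_1}\, \eps^{-1/H_1},
\]
which is exactly (\ref{e:tobechecked}) with $\tau = 1/H_1$ and $\kappa = c(H_1)\, 2^{1/H_1}$. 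Continuity of $Y$ is standard for fractional Brownian motion and independence is assumed.

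Next I would plug these values into the conclusion of Theorem~\ref{thm:truefbm} (with $H = H_2$). The exponent simplifies as
\[
\frac{\tau}{H(1+\tau)} = \frac{1/H_1}{H_2 (1 + 1/H_1)} = \frac{1}{H_2 (1 + H_1)},
\]
matching the corollary. For the constant, using $1/(1+\tau) = H_1/(1+H_1)$ and $\tau/(1+\tau) = 1/(1+H_1)$, one collects the factors
\[
\kappa^{1/(1+\tau)} \tau^{-\tau/(1+\tau)} (1+\tau) c(H_2)^{\tau/(1+\tau)}
= \left(1 + \tfrac{1}{H_1}\right) \bigl[\, c(H_1)^{H_1}\, 2\, H_1\, c(H_2)\,\bigr]^{1/(1+H_1)},
\]
by combining $c(H_1)^{H_1/(1+H_1)} \cdot 2^{1/(1+H_1)} \cdot H_1^{1/(1+H_1)} \cdot c(H_2)^{1/(1+H_1)}$ inside the bracket.

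There is no genuine obstacle here: the work is essentially bookkeeping. The only mildly delicate point is confirming that all hypotheses of Theorem~\ref{thm:truefbm} are met — Gaussian Anderson for $Y$ to legitimately use Lemma~\ref{lem:anders}, independence of $X$ and $Y$, and choice of a continuous modification of $Y$ — and then performing the algebra that reshapes the general constant into the symmetric-looking product displayed in the corollary.
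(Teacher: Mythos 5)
Your proposal is correct and follows exactly the route the paper takes: invoke Lemma~\ref{lem:anders} (valid since $Y$ is centered Gaussian, hence Anderson) to pass from the known small deviation constant $c(H_1)$ of fBM to the range constant $\kappa=c(H_1)2^{1/H_1}$ with $\tau=1/H_1$, then substitute into Theorem~\ref{thm:truefbm}. The algebraic simplification of the exponent and the constant is carried out correctly and matches the stated formula.
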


Recall that we obtain the same logarithmic small deviation order as for a two-sided fBM. Moreover,
the iteration of $n$ `true' fractional Brownian motions provides the same asymptotics as
obtained in (\ref{eqn:h1hn}) for the two-sided fBM.

Note that, in spite of the identity of the assertions, Theorem~\ref{thm:truefbm} does not follow from 
Theorem~\ref{thm:mainstrong}, since $X$ is not two-sided. We will show now how the stationarity of
increments of the `true' fBM replaces the independence property of the two-sided process. 
For the proof of Theorem~\ref{thm:truefbm}, we need the following lemma.

\begin{lem} \label{lem:truefbm}
For any $\delta\in (0,1)$ there exists $K_\delta>0 $ such that 
for all $N\le 0 \le M$, for all $\eps>0$, and for 
any centered Gaussian process $X(t), t\in \R$, with stationary increments it is true that
\begin{eqnarray*}
& & \pr{\sup_{0\le t \le M+|N|}|X(t)|\le (1-\delta)\eps} \pr{|X (N)|\le \eps/K_\delta}
\\
&\le&
\pr{\sup_{N\le t \le M}|X(t)|\le\eps}
\\
&\le&
\pr{\sup_{0\le t \le M+|N|}|X(t)|\le(1+\delta)\eps} \pr{|X (N)|\le \eps/K_\delta}^{-1}.
\end{eqnarray*}
\end{lem}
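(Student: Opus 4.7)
The plan is to translate the interval $[N,M]$ to $[0,L]$ with $L:=M-N=M+|N|$ by exploiting stationarity of increments. Setting $Y(s):=X(s+N)-X(N)$ for $s\in[0,L]$, the process $Y$ is centered Gaussian with $Y(0)=0$ and stationary increments, hence $Y\deq X$ on $[0,L]$. Consequently
\[
\sup_{N \le t \le M} |X(t)| = \sup_{0 \le s \le L} |Y(s) + X(N)|.
\]
Introducing the shorthands $S:=\sup_{0\le s\le L}|Y(s)+X(N)|$, $T:=\sup_{0\le s\le L}|Y(s)|$ and $Z:=X(N)$, the triangle inequality gives $T-|Z|\le S\le T+|Z|$, whence the deterministic inclusions
\[
\{T\le(1-\delta)\eps\}\cap\{|Z|\le\delta\eps\} \subseteq \{S\le\eps\},\qquad \{S\le\eps\}\cap\{|Z|\le\delta\eps\}\subseteq\{T\le(1+\delta)\eps\}.
\]

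Taking $K_\delta:=1/\delta$, so that $\eps/K_\delta=\delta\eps$, the lemma then reduces to the single Gaussian correlation estimate
\[
\pr{A\cap\{|Z|\le\delta\eps\}} \ge \pr{A}\cdot\pr{|Z|\le\delta\eps},
\]
applied twice: with $A=\{T\le(1-\delta)\eps\}$ to obtain the lower bound from the first inclusion (using $Y\deq X$ to identify the first factor with the small-ball probability of $X$ on $[0,L]$), and with $A=\{S\le\eps\}$ to obtain the upper bound from the second inclusion after dividing through by $\pr{|Z|\le\delta\eps}$.

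The main obstacle is that $Y$ and $Z$ are typically \emph{not} independent — for fractional Brownian motion with Hurst index different from $1/2$, the increment $X(s+N)-X(N)$ is correlated with $X(N)$ — so the factorization cannot be obtained from independence. The resolution is that $(Y(s_1),\dots,Y(s_n),Z)$ is a centered Gaussian vector and that both $A$ and $\{|Z|\le\delta\eps\}$ are, on every finite sub-family of time points, intersections of symmetric slabs $\{|\ell|\le c\}$ in the Gaussian space spanned by values of $X$. The classical Sidak--Khatri correlation inequality — which asserts positive correlation between a symmetric slab and any finite intersection of symmetric slabs of a centered Gaussian vector — therefore delivers the required estimate on every finite sub-family; the process statement follows by passing to a countable dense subset of $[0,L]$ and using separability. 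This yields the claim with the explicit constant $K_\delta=1/\delta$.
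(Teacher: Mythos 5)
Your proof is correct and follows the same overall strategy as the paper: shift to the interval $[0,M+|N|]$ via stationarity of increments, use the triangle inequality to pass between $|X(t)|$ and $|X(t)-X(N)|$ at the cost of an event $\{|X(N)|\le c\}$, and then decouple that event from the supremum event by a Gaussian correlation inequality. The one genuine difference is the correlation tool. The paper invokes Li's weak correlation inequality, which forces the intermediate step $(1+\delta/2)\eps\rightsquigarrow\eps$ because that inequality only produces a product after shrinking one of the sets by a factor $\lambda<1$; you instead observe that one of the two events is always a single symmetric slab $\{|X(N)|\le\delta\eps\}$, so the classical \v{S}id\'ak inequality applies directly without any shrinking. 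That choice is both more elementary (\v{S}id\'ak's 1967 lemma is a fully settled special case of the Gaussian correlation conjecture, whereas Li's inequality is a 1999 refinement designed for situations where neither event is a slab) and gives a cleaner explicit constant $K_\delta=1/\delta$. The finite-dimensional-plus-separability reduction you mention is exactly the right way to lift \v{S}id\'ak from vectors to the process level, and the identification $Y\deq X$ on $[0,L]$ from stationarity of increments is the same observation the paper uses. In short: correct, same decomposition, marginally sharper and more elementary correlation lemma.
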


\begin{proof} 
To see the upper bound observe that the stationarity of increments and weak correlation 
inequality (cf.\ \cite{licorrelation}) yield
\begin{eqnarray*}
\pr{\sup_{0\le t \le M+|N|}|X(t)|\le(1+\delta)\eps} 
&=&
\pr{\sup_{N\le t \le M}|X(t)- X (N)|\le(1+\delta)\eps}
\\
&\ge&  \pr{\sup_{N\le t \le M}|X(t)|\le(1+\delta/2)\eps,
|X (N)|\le \delta\eps/2 }
\\
&\ge&
\pr{\sup_{N\le t \le M}|X(t)|\le \eps}
\pr{|X (N)|\le \eps/K_\delta }.
\end{eqnarray*}

For the lower bound, using the same arguments in inverse order we get
\begin{eqnarray*}
\pr{\sup_{N\le t \le M}|X(t)|\le\eps} 
&=& \pr{\sup_{N\le t \le M}|X(t)-X (N)+X (N)|\le\eps}\\
&\ge&
\pr{\sup_{N\le t \le M}|X(t)-X (N)|\le(1-\delta/2)\eps; |X (N)|\le \delta\eps/2}
\\
&\ge&
\pr{\sup_{N\le t \le M}|X(t)-X (N)|\le(1-\delta)\eps }
\pr{|X (N)|\le \eps/K_\delta }
\\
&=&
\pr{\sup_{0\le t \le |N|+M}|X(t)|\le(1-\delta)\eps}
\pr{|X (N)|\le \eps/K_\delta }.
\end{eqnarray*}\end{proof}


\begin{proof}[ of Theorem~\ref{thm:truefbm}] 
Let $\delta>0$ and define as before 
$N:=\inf_{t\in [0,1]} Y(t)$ and $M:=\sup_{t\in [0,1]} Y(t)$. 
Then Lemma~\ref{lem:truefbm} yields that, for some constant $K_\delta>0$,
\begin{eqnarray*}
 && \pr{\sup_{t\in[0,1]}|X(Y(t))|\le\eps}
 \\
 &=& \E\,\left[ \pr{\left.\sup_{N\le t \le M}|X(t)|\le\eps\right| Y} \right] 
 \\
 &\geq& \E \,\left[ \pr{\left. \sup_{0\le t \le M+|N|}|X(t)|\le (1-\delta)\eps \right| Y} 
   \pr{\left.|X (N)|\le \frac{\eps}{K_\delta}\right|Y}\right]
 \\
&=& \E \, \left[\pr{\left. \sup_{0\le t \le 1}|X(t)|\le \frac{(1-\delta)\eps}{(M+|N|)^{H}} \right| Y} 
   \pr{\left.|X (1)|\le \frac{\eps}{K_\delta |N|^{H}}\right|Y}\right]
\\
&\geq & \E \,\left[ \pr{\left. \sup_{0\le t \le 1}|X(t)|\le \frac{(1-\delta)\eps}{(M+|N|)^{H}} \right| Y} 
    \pr{\left.|X (1)|\le \frac{\eps}{K_\delta (M+|N|)^{H}}\right|Y}\right]
\\
&=:&\E \left[ f(M+|N|) g(M+|N|) \right].
\end{eqnarray*}
Note that $f,g\geq 0$ are non-increasing functions. Thus, by the FKG inequality (cf.\ e.g.\ \cite{liggett}, p.\ 65), 
the last term is bounded from below by
\begin{multline*} 
\E f(M+|N|) \cdot \E g(M+|N|) 
\\ 
= \E \left[\pr{\left. \sup_{0\le t \le 1}|X(t)|\le \frac{(1-\delta)\eps}{(M+|N|)^{H}} \right| Y}\right] 
   \cdot \E \, \pr{|X (1)|\le \frac{\eps}{K_\delta (M+|N|)^{H}}}.
\end{multline*}
The first term can be handled as in the proof of Theorem~\ref{thm:mainstrong}, 
the resulting order is 
\begin{multline*} 
- \log \E\, \pr{\left. \sup_{0\le t \le 1}|X(t)|\le \frac{(1-\delta)\eps}{(M+|N|)^{H}}  \right| Y} 
\\ 
\sim \kappa^{1/(1+\tau)} \tau^{-\tau/(1+\tau)} (1+\tau) c(H)^{\tau/(1+\tau)} 
   ((1-\delta)\eps)^{-\tau/(H(1+\tau))}.
\end{multline*}
On the other hand, one easily proves that 
$$
\E \, \pr{|X (1)|\le \frac{\eps}{K_\delta (M+|N|)^{H}}}
$$ 
admits a lower bound of order $\eps$, as $\eps\to 0$. 
Therefore, 
\begin{multline*}
-\log \pr{\sup_{t\in[0,1]}|X(Y(t))|\le\eps} 
\\ \lesssim 
\kappa^{1/(1+\tau)} \tau^{-\tau/(1+\tau)} (1+\tau) c(H)^{\tau/(1+\tau)} 
((1-\delta)\eps)^{-\tau/(H(1+\tau))}.
\end{multline*}
Letting $\delta\to 0$ finishes the proof. The upper bound can be proved along the same lines
or by using H\"older inequality instead of FKG inequality.

\end{proof}

\section{The example of \texorpdfstring{$\alpha$}{alpha}-time Brownian motion} \label{sec:alphatime}
\subsection{Motivation}
Let $X$ be a Brownian motion and $Y$ be a symmetric $\alpha$-stable L\'{e}vy process.

In \cite{nane}, the small deviation problem for $X\circ Y$, called $\alpha$-time 
Brownian motion there, is studied and further applied to the LIL of Chung type and
 results for the local time for that processes. However, the method of proof in \cite{nane} is 
 essentially the same as for our Theorem~\ref{thm:mainstrong}. Note that this proof 
 is wrong in the case of $\alpha$-time Brownian motion, since the inner process $Y$ 
 is {\it not continuous}, which is a main ingredient of the proof. In fact, it is used 
 that $Y([0,1]) = [N,M]$, with as above $N:=\inf_t Y(t)$ and $M:=\sup_t Y(t)$, 
 which is not true for this $Y$.

However, trivially $Y([0,1]) \subsetneq [N,M]$, and thus the proof in \cite{nane} 
does give a {\it lower bound} for the small deviation probability. The purpose of this 
section is to give a correct proof of the upper bound.

More precisely, we show the following version of Theorem~2.3 in \cite{nane}. 
This result implies weaker versions of the results in \cite{nane}.

\begin{thm} 
Let $X$ be a two-sided Brownian motion and $Y$ be a strictly $\alpha$-stable L\'{e}vy process 
(independent of $X$) that is not a subordinator. Then 
$$ 
-\log \pr{ \sup_{t\in[0,1]} |X(Y(t))| \leq \eps } 
\approx \eps^{-2\alpha/(1+\alpha)}.$$ 
\label{thm:alphatime}
\end{thm}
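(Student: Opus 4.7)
For the first direction, $-\log\pr{\sup_{t\in[0,1]}|X(Y(t))|\le\eps}\preceq\eps^{-2\alpha/(1+\alpha)}$, I would follow Nane's approach with the observation that although $Y$ is discontinuous, the trivial inclusion $Y([0,1])\subseteq[N,M]$ (with $N:=\inf_{[0,1]}Y$, $M:=\sup_{[0,1]}Y$) still gives
\[
\pr{\sup_{t\in[0,1]}|X(Y(t))|\le\eps}\ge\pr{\sup_{s\in[N,M]}|X(s)|\le\eps}.
\]
Exactly as in the proof of Theorem~\ref{thm:main}, using the two-sided construction of $X$ and Brownian self-similarity, the right-hand side reduces up to constants to $\E\exp(-\pi^2(M-N)/(8\eps^2))$. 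The range of a symmetric $\alpha$-stable L\'{e}vy process has small-ball asymptotics $\pr{M-N\le x}\approx\exp(-cx^{-\alpha})$, which follows from the first-exit-time estimate for $Y$ from an interval of length $x$ combined with $1/\alpha$-self-similarity. A saddle-point analysis of the Laplace transform at $\lambda\approx\eps^{-2}$ yields $-\log\E\exp(-\lambda(M-N))\approx\lambda^{\alpha/(1+\alpha)}$, producing the claimed exponent.

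The reverse direction $-\log\pr{\cdot}\succeq\eps^{-2\alpha/(1+\alpha)}$ is the genuinely new content, since the inclusion $Y([0,1])\subseteq[N,M]$ goes the wrong way for an upper bound on the probability. My plan is to fix a scale $h=h(\eps)$, to be optimized near $\eps^{2/(1+\alpha)}$, and decompose $Y=J^{(h)}+Y^{(h)}$, where $J^{(h)}$ is the compound Poisson part collecting the jumps of absolute size $\ge h$ and $Y^{(h)}$ is the H\"{o}lder-continuous small-jump remainder. The number of big jumps in $[0,1]$ is Poisson with mean of order $h^{-\alpha}$, and consecutive skeleton points are separated by at least $h$. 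On the event $\{\sup_t|X(Y(t))|\le\eps\}$, the Gaussian vector of $X$-values at the skeleton points must lie in a small cube, and its conditional small-ball probability is bounded above by the Gaussian density at the origin, giving essentially a factor $C\eps/\sqrt{h}$ per constraint. The small-jump remainder $Y^{(h)}$ should contribute only a lower-order correction that can be handled by Theorem~\ref{thm:main} applied to $X\circ Y^{(h)}$. Taking expectation over the Poisson number of jumps and optimizing in $h\approx\eps^{2/(1+\alpha)}$ produces the claimed order.

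The hard part will be the Gaussian density step, because the skeleton of a stable walk can cluster arbitrarily close to previously visited positions, so the determinant of the conditional covariance matrix can be very small and $\E(\det\Sigma)^{-1/2}$ may even diverge. I expect to need a truncation: a good event controlling the minimum sorted spacing among the visited values, whose complement has probability bounded by the target order, with the joint Gaussian probability estimated on the good event via correlated-Gaussian inequalities in the spirit of the FKG step in the proof of Theorem~\ref{thm:truefbm}. A simpler alternative I would try first is to bypass the big-jump decomposition entirely and approximate $Y$ itself by a continuous modification (for example by replacing each jump by a straight segment), apply Theorem~\ref{thm:main} directly to the approximation, and control the error using the smallness of the removed jumps.
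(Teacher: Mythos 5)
Your treatment of the direction $-\log\pr{\sup_{t\in[0,1]}|X(Y(t))|\le\eps}\preceq\eps^{-2\alpha/(1+\alpha)}$ is fine and matches the paper: the inclusion $Y([0,1])\subseteq[N,M]$ still yields the lower bound on the probability, and the paper obtains it directly from Theorem~\ref{thm:main} together with the known small deviation rate $-\log\pr{\sup_{t\in[0,1]}|Y(t)|\le\eps}\approx\eps^{-\alpha}$ for strictly $\alpha$-stable L\'evy processes.

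For the reverse direction your plan has a genuine gap, and your own diagnosis points at it. The good event you propose — a lower bound on the minimum spacing among the visited skeleton values — cannot have complement of the target order $\exp\bigl(-c\,\eps^{-2\alpha/(1+\alpha)}\bigr)$. Even conditionally on the big-jump times, the probability that two of the roughly $h^{-\alpha}$ skeleton values land within distance $\delta$ of one another is of polynomial order in $\delta$ (and in fact becomes overwhelmingly likely once there are many skeleton points), so no such truncation can absorb the cluster contribution. Your fallback (interpolating $Y$ linearly across jumps and applying Theorem~\ref{thm:main} to the resulting continuous $\tilde{Y}$) also fails, but for a different reason: $Y([0,1])\subseteq\tilde{Y}([0,1])$, so $\pr{\sup_t|X(Y(t))|\le\eps}\ge\pr{\sup_t|X(\tilde{Y}(t))|\le\eps}$, which gives a bound in the wrong direction.

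The paper's mechanism sidesteps clustering altogether by working with the covering number $N(K,\cdot)$ of the random range $K=Y([0,1])$. A deterministic lemma (Proposition~\ref{prop:coverlp}'s companion result) shows $\pr{\sup_{t\in K}|X(t)|\le\eps}\le e^{1-N(K,c_0\eps^2)}$: from $K$ one extracts $N(K,c_0\eps^2)$ points that are automatically $c_0\eps^2$-separated, and the product of one-step Brownian increment probabilities over this chosen well-separated skeleton gives the exponential bound — no determinants, no spacing issues, because the separated skeleton is chosen from the range rather than being the raw jump skeleton. The randomness of $Y$ is then isolated into the single estimate $\pr{N(Y([0,1]),\eps)<\delta k}\le e^{-ck}$ for $k=\lceil\eps^{-\alpha/(1+\alpha)}\rceil$ (Proposition~\ref{prop:coverlp}). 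Its proof is combinatorial and uses only stationarity and independence of increments: after scaling, split $[0,T]$ into $k$ equal blocks; if the covering number of the full range is $<\delta k$, then on most blocks the covering number does not grow, and the conditional probability (given the past) of no growth on a block is uniformly bounded by some $c_1<1$; summing over the choices of the non-growing blocks gives the exponential bound. (For $\alpha>1$ the paper also offers a shortcut: $N(K,\eps)<k$ forces the maximum local time to exceed $(\eps k)^{-1}$, and Lacey's tail estimate for the maximum local time of a stable process finishes the proof.) I would encourage you to replace your jump decomposition with this covering-number reduction, which is exactly the device needed to make the upper bound rigorous.
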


We note that this is not as strong as the assertion of Theorem~2.3 in \cite{nane}: 
the existence of the small deviation constant and its value are not assured. 
This should be subject to further investigation.

Note furthermore that we prove the result for general strictly stable L\'{e}vy processes 
$Y$, symmetry is not a feature that would be required here. The only property that 
is used is self-similarity.

For the sake of completeness let us mention that in the case that $Y$ is an $\alpha$-stable 
subordinator ($0<\alpha<1$), the above result is wrong. Namely, in that case $X\circ Y$ is 
in fact a \emph{symmetric} $(2\alpha)$-stable L\'{e}vy process itself, so that we then get 
that 
$$
-\log \pr{ \sup_{t\in[0,1]} |X(Y(t))| \leq \eps } \approx \eps^{-2\alpha}.
$$

We shall even prove the following more general version of Theorem~\ref{thm:alphatime}.

\begin{thm} 
Let $X$ be a two-sided strictly $\beta$-stable L\'{e}vy process ($0< \beta\leq 2$) 
and $Y$ be a strictly $\alpha$-stable L\'{e}vy process ($0< \alpha\leq 2$, independent of $X$) 
that is not a subordinator. Then 
$$
-\log \pr{ \sup_{t\in[0,1]} |X(Y(t))| \leq \eps } \approx \eps^{-\beta \alpha/(1+\alpha)}.
$$ 
\label{thm:betatime}
\end{thm}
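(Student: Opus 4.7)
The plan is to prove the two directions of the equivalence separately. The easy one (the $\preceq$ direction for $-\log\pr{\cdot}$, i.e.\ a lower bound on the probability) follows from Theorem~\ref{thm:main} in its lower-bound form, which does not require continuity of $Y$. The strictly $\beta$-stable L\'{e}vy process $X$ is $(1/\beta)$-self-similar and satisfies $-\log\pr{\sup_{t\in[0,1]}|X(t)|\le\eps}\approx\eps^{-\beta}$, while $-\log\pr{\sup_{t\in[0,1]}|Y(t)|\le\eps}\approx\eps^{-\alpha}$. With $\theta=\beta$, $\tau=\alpha$, and $H=1/\beta$, Theorem~\ref{thm:main} produces the exponent $1/(1/\theta+H/\tau)=\beta\alpha/(1+\alpha)$, matching the claim.

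For the harder direction (the $\succeq$ direction, i.e.\ an upper bound on the probability), which cannot come from Theorem~\ref{thm:main} because $Y$ is discontinuous, the plan is to discretize. For $n$ to be fixed later, set $V_k:=Y(k/n)$, so that
\[
\pr{\sup_{t\in[0,1]}|X(Y(t))|\le\eps}\le \pr{\max_{0\le k\le n}|X(V_k)|\le\eps}.
\]
I would condition on $Y$, sort the $V_k$ into $V_{(0)}\le\cdots\le V_{(n)}$, and handle positive and negative values separately using independence of the two wings of the two-sided $X$. By independence of the increments of $X$ together with the uniform bound $f_\beta\le C$ on the $\beta$-stable density, one obtains
\[
\pr{\max_k|X(V_{(k)})|\le\eps\,\big|\,Y}\le (2C\eps)^n\prod_{k=1}^n \Delta_k^{-1/\beta},\qquad \Delta_k:=V_{(k)}-V_{(k-1)}.
\]
It then remains to integrate this over the law of $Y$. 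My approach would split at threshold $\delta:=\eps^{\beta/(1+\alpha)}$. On $\{M-N<\delta\}$ the small-deviation estimate for $Y$, combined with the Anderson property via Lemma~\ref{lem:anders}, gives $\pr{M-N<\delta}\le\exp(-c\delta^{-\alpha})=\exp(-c\eps^{-\beta\alpha/(1+\alpha)})$, already of the target order. On $\{M-N\ge\delta\}$ I would take $n\sim\eps^{-\beta\alpha/(1+\alpha)}$; the scaling forces $2C\eps\, n^{1/\beta}/\delta^{1/\beta}\sim\text{const}$, so provided the sorted gaps $\Delta_k$ are typically of size at least $\sim\delta/n$, the product bound collapses to $2^{-n}=\exp(-c\eps^{-\beta\alpha/(1+\alpha)})$.

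The main obstacle I foresee is justifying sorted-gap uniformity on $\{M-N\ge\delta\}$: a naive AM--GM estimate controls the reciprocal product in the \emph{wrong} direction, so some probabilistic input is required to absorb an exceptional set into the same bound $\exp(-c\eps^{-\beta\alpha/(1+\alpha)})$. The plan here is to use the $(1/\alpha)$-self-similarity together with the non-subordinator assumption (which guarantees that, by some $c_0>0$, $\pr{\pm(Y(s+h)-Y(s))\ge c_0 h^{1/\alpha}}\ge c_0$ for both signs, so $Y$ genuinely oscillates) to obtain concentration for the sorted spacings of $(V_k)$. A cleaner backup route, should the direct gap analysis prove delicate, is to decompose $Y$ into its large-jump part (a compound Poisson of intensity $\sim\delta^{-\alpha}$) and its small-jump part $\widetilde Y$, apply Theorem~\ref{thm:mainstrong} to the now-nearly-continuous $\widetilde Y$, and absorb the perturbation from the finitely many large jumps into the constant; the non-subordinator hypothesis is needed to prevent degeneration into the case where $X\circ Y$ is itself a $(\beta\alpha)$-stable process with the strictly better rate $\eps^{-\beta\alpha}$.
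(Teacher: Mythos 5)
Your easy direction ($\preceq$) is exactly the paper's: Theorem~\ref{thm:main} in its lower-bound form (which drops the continuity assumption on $Y$) plus the classical small deviation rates for stable L\'evy processes. That part is fine.

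The hard direction has a genuine gap, and you have correctly put your finger on where it is, but you have not resolved it. Your discretization gives, after taking $\min(1,\cdot)$ in each factor (which you need, since without it the bound can exceed $1$),
\[
\pr{\max_k|X(V_{(k)})|\le\eps\,\big|\,Y}\le \prod_{k}\min\Bigl(1,\ 2C\eps\,\Delta_k^{-1/\beta}\Bigr),
\]
and this is small only if \emph{many} of the sorted gaps $\Delta_k$ exceed $(2C\eps)^\beta$, i.e.\ only if the covering number at scale $\eps^\beta$ of the finite set $\{V_0,\dots,V_n\}$ is of order $n$. The event $\{M-N\ge\delta\}$ does not imply this: the $n+1$ sampled values can cluster heavily around a handful of levels even when the range is large (this is exactly what "AM--GM goes the wrong way" is telling you). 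So the step ``provided the sorted gaps $\Delta_k$ are typically of size at least $\sim\delta/n$'' is the entire theorem, not a side condition, and the intuition that $Y$ oscillates does not convert into a quantitative concentration bound for the sorted spacings of a dependent sample without substantial new work. Your backup route does not save you either: after removing jumps larger than $\delta$, the process $\widetilde Y$ is \emph{still discontinuous} (it has all the jumps of size $<\delta$), so $\widetilde Y([0,1])$ is still a proper subset of $[\widetilde N,\widetilde M]$ and Theorem~\ref{thm:mainstrong} cannot be applied to it any more than to $Y$ itself. The "nearly continuous" heuristic is precisely what the continuity hypothesis in Theorem~\ref{thm:mainstrong} is there to rule out.

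What the paper actually does is isolate the probabilistic input you are missing as a statement about the covering number $N(K,\cdot)$ of the full range $K=Y([0,1])$. First there is a clean deterministic-in-$K$ reduction: for a strictly $\beta$-stable $X$ and any compact $K$,
\[
\pr{\sup_{t\in K}|X(t)|\le\eps}\le e^{\,1-N(K,c_0\eps^{\beta})},
\]
proved by picking a maximal $c_0\eps^\beta$-separated chain in $K$ and using independence and stationarity of increments of $X$ (this is essentially your product-of-increments idea, but applied to the range rather than to $n$ deterministic samples, which sidesteps the sorted-gap issue entirely). Combined with $\E[e^{-N}]\le e^{-R}+\pr{N\le R}$, the matter reduces to Proposition~\ref{prop:coverlp}: $\pr{N(Y([0,1]),\eps)<\delta k}\le e^{-ck}$ for $k=\lceil\eps^{-\alpha/(1+\alpha)}\rceil$. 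That proposition is proved by a genuinely different mechanism: split $[0,T]$ (after scaling, $T=\eps^{-\alpha}$) into $k$ blocks of length $L=T/k$, observe that on $\{N_{[0,T]}(1)\le\delta k\}$ the covering-number-in-time process $t\mapsto N_{[0,t]}(1)$ fails to increase on at least $(1-\delta)k$ of the blocks, bound the conditional probability of "no increase on a block" uniformly away from $1$ using stationarity and the fact that $\sup_{A:|A|\le 2k}\pr{Y(1)\in A}<1$, and then pay a $2^{k}$-type combinatorial price for the choice of bad blocks, which is beaten for $\delta$ small. (For $\alpha>1$ there is also the shortcut via Lacey's large deviation bound for the maximum local time, which directly controls how clustered the range can be.) So the overall shape of your argument is compatible with the paper's, but the entire content of the hard direction --- namely a quantitative lower bound on how spread out $Y([0,1])$ is, with exponentially small failure probability at the right scale --- is the piece you have not supplied, and neither of your two proposed routes produces it.
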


\begin{rem}
The result is also true if we take an $H$-fractional Brownian motion or $H$-Riemann-Liouville process 
as $X$, cf.\ \cite{lindezipfel}. Then of course, $\beta=1/H$.
\end{rem}

The proof of Theorem~\ref{thm:betatime} is given in several steps. First note that the lower 
bound follows from our Theorem~\ref{thm:main} and Prop.~3, Section~VIII, 
in \cite{bertoin} (the result actually dates back to \cite{taylor}, 
\cite{mogulskii}, and \cite{bormog}). The upper bound follows from 
Proposition~\ref{prop:coverlp} below, as explained there.

\subsection{Handling the outer Brownian motion}
In order to prove Theorem~\ref{thm:betatime}, we shall proceed as follows. In a first step, 
we show that the small deviation problem of processes that are subordinated to Brownian 
motion (or more generally, to a strictly $\beta$-stable L\'{e}vy process) are closely 
connected to the (random) entropy numbers of the range of the inner process 
(i.e.\ $K=Y([0,1])$). This technique was previously used in \cite{lindeshi} and 
\cite{lindezipfel} for fractional Brownian motion. Then we estimate the entropy numbers 
of the range of the inner process, in our case a strictly $\alpha$-stable L\'{e}vy process 
(the subordinator case was studied in \cite{lindeshi}). This requires completely new arguments.

To formulate the first step, let us define the following notation. For given $\eps>0$ and a 
compact set $K\subseteq \R$, let 
$$
N(K,\eps):=\min\left\lbrace n\geq 1, \exists x_1,\ldots, x_n \in \R : 
\forall x\in K \exists i\leq n :\, |x-x_i|\leq \eps \right\rbrace.
$$ 
These quantities are usually called covering numbers of the set $K$ and characterize its metric 
entropy. We can get rid of the randomness of the 
outer process $X$ in the following way.

\begin{prop} 
Let $X$ be a (two-sided) strictly $\beta$-stable L\'{e}vy process, $0< \beta\leq 2$. 
Then there is a constant $c_0>0$ such that, for all compact sets $K\subset \R$ and for 
all $\eps>0$,  
$$
 \pr{ \sup_{t\in K} |X(t)| \leq \eps } \leq e^{1-N(K,c_0 \eps^\beta)}.
$$
\end{prop}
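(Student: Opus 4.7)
The plan is to exploit independence of increments and self-similarity of $X$, together with a packing argument: if $\sup_{t\in K}|X(t)|\le\eps$, then $X$ is uniformly small on a large collection of well-separated points, and each consecutive increment between them is then forced to be small. Since each increment is an independent strictly $\beta$-stable variable whose scale can be made large relative to $\eps$, each such event has probability bounded away from $1$, and multiplying gives an exponentially small bound in the number of points, which in turn is at least $N(K,c_0\eps^\beta)$.

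More concretely, first I would pick the constant. Since $X$ is strictly $\beta$-stable and non-degenerate, the law of $X(1)$ has a continuous density, so $\P(|X(1)|\le a)\to 0$ as $a\to 0$. Choose $c_0>0$ large enough that
\[
p\;:=\;\P\bigl(|X(1)|\le 2 c_0^{-1/\beta}\bigr)\;\le\; e^{-1}.
\]
Next, given compact $K\subset\R$ and $\eps>0$, let $n:=N(K,c_0\eps^\beta)$ and extract a maximal $(c_0\eps^\beta)$-packing $\{t_1<t_2<\dots<t_m\}\subseteq K$; by the standard packing/covering comparison, $m\ge n$, and consecutive gaps satisfy $t_i-t_{i-1}>c_0\eps^\beta$.

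Now I would treat the one-sided case first: assume $t_1\ge 0$ (or $t_m\le 0$, by symmetric argument using the independent copy $X'$). On the event $\sup_{t\in K}|X(t)|\le\eps$, the triangle inequality gives $|X(t_i)-X(t_{i-1})|\le 2\eps$ for all $i\ge 2$. By independence and stationarity of increments of the Lévy process, together with strict $\beta$-stability $X(t_i)-X(t_{i-1})\stackrel{d}{=}(t_i-t_{i-1})^{1/\beta}X(1)$, each factor is bounded by
\[
\P\!\left(|X(1)|\le \frac{2\eps}{(t_i-t_{i-1})^{1/\beta}}\right)\le \P\!\left(|X(1)|\le 2c_0^{-1/\beta}\right)=p,
\]
so the full probability is at most $p^{m-1}\le e^{1-m}\le e^{1-N(K,c_0\eps^\beta)}$, as desired.

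For the two-sided case I would split $K=K^+\cup K^-$ with $K^+=K\cap[0,\infty)$ and $K^-=K\cap(-\infty,0]$; the processes $(X(t))_{t\ge 0}$ and $(X(t))_{t\le 0}$ are independent by construction. Apply the one-sided bound to each piece and use subadditivity $N(K,\eta)\le N(K^+,\eta)+N(K^-,\eta)$; this produces a bound of the form $e^{C-N(K,c_0\eps^\beta)}$ with a harmless additive constant $C$, which after absorbing into a (possibly slightly adjusted) choice of $c_0$ yields the stated inequality. The main technical obstacle is the bookkeeping of constants across the $K^\pm$ split — picking $c_0$ small enough on each side while not spoiling the common $c_0$ in the final bound; this is resolved by taking $p$ sufficiently small initially (say $p\le e^{-2}$) so that the cross-over loss is absorbed, rather than by any substantive new argument.
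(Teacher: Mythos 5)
Your approach is essentially the same as the paper's: choose $c_0$ so that a single stable variable at scale $c_0^{1/\beta}\eps$ is small with probability at most $e^{-1}$, extract a $c_0\eps^\beta$-separated subset of $K$ (with cardinality at least $N(K,c_0\eps^\beta)$ by the usual packing/covering comparison), observe that on the good event the consecutive differences $|X(t_{i+1})-X(t_i)|\le 2\eps$, and multiply the resulting factors using independence of increments and strict stability. The one place you diverge is in handling the two-sided construction: the paper simply takes the separated points across all of $K$ and writes $\pr{|X(t_{i+1})-X(t_i)|\le 2\eps}=\pr{|X(t_{i+1}-t_i)|\le 2\eps}$ including the one increment that may straddle $0$, while you split $K=K^+\cup K^-$ and use $N(K,\eta)\le N(K^+,\eta)+N(K^-,\eta)$.

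The split introduces a genuine loose end that your proposed fix (take $p\le e^{-2}$) does not close. The problem is the $-1$ in the exponent $p^{m-1}$: if $K$ straddles $0$ in such a way that $N(K^+,c_0\eps^\beta)=N(K^-,c_0\eps^\beta)=1$ but $N(K,c_0\eps^\beta)=2$ (e.g.\ $K=[-2c_0\eps^\beta,-c_0\eps^\beta]\cup[c_0\eps^\beta,2c_0\eps^\beta]$), then both one-sided bounds are trivial ($p^0=1$), their product is $1$, and no choice of $p$ recovers the required $e^{-1}$. The paper's route avoids this: there the straddling increment $X(t_{i+1})-X(t_i)=X(t_{i+1})-X'(-t_i)$ is a difference of two \emph{independent} strictly $\beta$-stable variables, which is again strictly $\beta$-stable with scale parameter $\left(t_{i+1}+(-t_i)\right)^{1/\beta}=(t_{i+1}-t_i)^{1/\beta}\ge c_0^{1/\beta}\eps$, so the same $c_0$ (or, for asymmetric $X$, a $c_0$ chosen via the uniform bound on stable densities at a fixed scale) controls it. Incorporating that one cross-zero increment directly, rather than discarding it via the $K^\pm$ decomposition, is exactly what makes the bound hold down to $N(K,c_0\eps^\beta)=2$. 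Everything else in your argument matches the paper's.
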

\begin{proof}
This simple result can be proved in essentially the same way as Proposition~3.1 in \cite{lindezipfel}, 
where it was shown for $X$ being fractional Brownian motion. We therefore 
just indicate the proof: choose $c_0$ so large that $\sup_{t\ge c_0} \pr{|X(t)|\le 2}\le e^{-1}$.
For $N=N(K,c_0 \eps^\beta)$ find an increasing sequence $t_1,...,t_N$ in $K$ such that
$t_{i+1}-t_i\ge c_0 \eps^\beta$ for all $i=1,...,N-1$. Then by independence of increments and strict
stability of $X$ we have
\begin{eqnarray*}
 \pr{ \sup_{t\in K} |X(t)| \leq \eps } 
 &\leq& 
\pr{ \sup_{1\le i\le N} |X(t_i)| \leq \eps } 
\leq 
\pr{ \sup_{1\le i\le N-1} |X(t_{i+1})-X(t_i)| \leq 2\eps }
\\
&=&
\prod_{i=1}^{N-1} \pr{  |X(t_{i+1})-X(t_i)| \leq 2\eps }
=\prod_{i=1}^{N-1} \pr{  |X(t_{i+1}-t_i)| \leq 2\eps }
\\
&=&
\prod_{i=1}^{N-1} \pr{  |X\left(\frac{t_{i+1}-t_i}{\eps^\beta}\right)| \leq 2 \eps }
\le e^{-(N-1)}.
\end{eqnarray*}
\end{proof}

Recall that in order to prove Theorem~\ref{thm:betatime} we want to get an upper bound for 
$$
\pr{ \sup_{t\in[0,1]} |X(Y(t))| \leq \eps } 
= \E \left[ \pr{ \left.\sup_{t\in K} |X(t)| \leq \eps \right| Y } \right] 
\leq \E \left[  e^{1-N(K,c_0 \eps^\beta)}\right],
$$
where we let $K=Y([0,1])$.
Since for any $R>0$ we have
\[
\E \left[  e^{-N(K,c_0 \eps^\beta)}\right] \le e^{-R} + \pr{N(K,c_0 \eps^\beta)\le R},
\]
the upper bound in Theorem~\ref{thm:betatime} follows immediately from the next result.

\begin{prop} 
Let $Y$ be a strictly $\alpha$-stable L\'{e}vy process and set $K=Y([0,1])$. Then
there exist small $c$ and $\delta$ depending on the law of $Y$ such that 
\begin{equation}
\pr{ N(K,\eps) < \delta k } \leq e^{-c k}, \label{main}
\end{equation}  
for  all $\eps>0$ and $k=\left\lceil\eps^{-\alpha/(1+\alpha)}\right\rceil$.
\label{prop:coverlp}
\end{prop}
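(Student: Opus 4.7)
The plan is to exploit the self-similarity of $Y$ and the boundedness of the density of $Y(1)$ to show that the discrete skeleton $(Y(i/k))_{i=0}^{k}$ alone already occupies $\gtrsim k$ distinct length-$2\eps$ cells with probability at least $1-e^{-ck}$, which forces $N(K,\eps)\gtrsim k$.

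First I would pass from covering numbers to cell-counts. Partition $\R$ into the cells $I_j:=[2j\eps,2(j+1)\eps)$, $j\in\Z$, and set
\[
D:=\#\bigl\{j\in\Z:\, Y(i/k)\in I_j\text{ for some } 0\le i\le k\bigr\}.
\]
Since a ball of radius $\eps$ has length $2\eps$ and therefore meets at most two adjacent $I_j$'s, every $\eps$-covering of $K$ has cardinality at least $D/2$, so $N(K,\eps)\ge D/2$. It thus suffices to prove $\pr{D<2\delta k}\le e^{-ck}$.

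Second, I rescale via self-similarity: $(Y(i/k))_{i=0}^{k}\deq(k^{-1/\alpha}W_i)_{i=0}^{k}$, where $W_i:=\xi_1+\cdots+\xi_i$ with i.i.d.\ $\xi_l\deq Y(1)$. Hence $D$ is distributed as the number of distinct values of $\lfloor W_i/c\rfloor$, $0\le i\le k$, with $c:=2\eps k^{1/\alpha}$. The choice $k=\lceil\eps^{-\alpha/(1+\alpha)}\rceil$ yields $c\asymp 1/k$, and in particular $c\le C_0/k$ for some constant $C_0>0$ and all $\eps$ small enough. Next, the probabilistic heart of the argument. Set $V_{i-1}:=\{\lfloor W_j/c\rfloor:0\le j<i\}$ and $B_i:=\ind_{\{\lfloor W_i/c\rfloor\notin V_{i-1}\}}$, so $D=1+\sum_{i=1}^{k}B_i$. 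Conditionally on $\mathcal F_{i-1}:=\sigma(W_0,\ldots,W_{i-1})$, the event $\{B_i=0\}$ requires the independent increment $\xi_i=W_i-W_{i-1}$ to land in an $\mathcal F_{i-1}$-measurable union of at most $i$ intervals of length $c$. Since $Y$ is strictly $\alpha$-stable, $Y(1)$ has a bounded Lebesgue density $f$; hence
\[
\pr{B_i=0\mid\mathcal F_{i-1}}\ \le\ \|f\|_\infty\,i\,c\ \le\ \|f\|_\infty C_0\,\frac{i}{k}.
\]

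Finally I would truncate and apply Chernoff. Choose $k_0:=\lfloor k/(4\|f\|_\infty C_0)\rfloor$, so that $\pr{B_i=1\mid\mathcal F_{i-1}}\ge 3/4$ for all $i\le k_0$. A standard coupling then shows $\sum_{i=1}^{k_0}B_i$ stochastically dominates a $\mathrm{Binomial}(k_0,3/4)$ variable, so Chernoff gives $\pr{\sum_{i=1}^{k_0}B_i\le k_0/2}\le e^{-c'k_0}\le e^{-ck}$. On the complementary event $D\ge 1+k_0/2\ge 2\delta k$ for some $\delta$ depending only on the law of $Y$, which yields $N(K,\eps)\ge\delta k$ as required.

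The hard part is the third step: the bound $\pr{B_i=0\mid\mathcal F_{i-1}}\lesssim i/k$ degrades linearly in $i$ and becomes useless once $i\sim k$, which is what forces the truncation at $k_0\propto k$; fortunately the loss is only a constant factor in $\delta$. A secondary technical point is the coupling needed to convert the sequence of conditional lower bounds into genuine stochastic domination by i.i.d.\ Bernoullis, as the $B_i$'s themselves are not independent.
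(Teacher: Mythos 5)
Your proof is correct, and it takes a genuinely different route from the paper's. The paper rescales time ($T=\eps^{-\alpha}$, so the covering radius becomes $1$), partitions $[0,T]$ into $k$ blocks of length $L=T/k\ge k^\alpha$, and observes that if $N_{[0,T]}(1)\le\delta k$ then for at least $(1-\delta)k$ blocks the covering number does not increase; it then runs a union bound over the $\binom{k}{\le \delta k}\le e^{\delta_1 k}$ possible index sets $J$ of ``stuck'' blocks, controlling each fixed $J$ by an essential-supremum/Markov argument that gives a \emph{uniform} conditional bound $c_1<1$ per block. The uniformity is bought by conditioning on the event that the range so far has covering number $\le k$ (hence small Lebesgue measure), which is exactly the event under investigation. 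Your argument instead works directly with the skeleton $(Y(i/k))_{i\le k}$, counts distinct cells via the self-similar rewriting as a random walk, and establishes the conditional bound $\pr{B_i=0\mid\mathcal F_{i-1}}\le\|f\|_\infty\,i\,c$; this bound is not uniform in $i$, so you truncate at $i\le k_0\asymp k$ and invoke stochastic domination by i.i.d.\ Bernoullis plus Chernoff, with no union bound over $J$ at all. The trade-off: the paper's conditioning trick yields a per-block constant independent of the block index, avoiding truncation, but pays with the combinatorial $\exp(\delta_1 k)$ factor which must then be beaten by choosing $\delta$ small; your argument is more elementary and avoids that bookkeeping, at the cost of wasting the late increments $i>k_0$ (harmless, since $k_0\asymp k$). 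Both hinge on the increment $Y(1)$ having a density --- you use $\|f\|_\infty<\infty$, the paper only that $\sup_{|A|\le 2}\pr{Y(1)\in A}<1$; for non-degenerate stable laws both hold. One small point: your bound $c\le C_0/k$ and the requirement $k_0\ge 1$ assume $\eps$ small; for $\eps$ bounded away from $0$, $k$ is bounded and one simply takes $\delta$ small enough that $\delta k<1$, making the event $\{N(K,\eps)<\delta k\}$ empty. Worth adding that sentence, but otherwise the proof stands.
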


The proof of Proposition~\ref{prop:coverlp} is given in the next subsection. 
In Section~\ref{sec:localtimeproof}, an alternative proof is given, which is much shorter 
but involves local times and thus only works for $\alpha>1$.

\begin{rem} Actually, the investigation of the small deviation probabilities for covering numbers such as  
$\pr{ N(Y([0,1]),\eps) < k }$
is an interesting problem in its own right and we hope to handle it elsewhere extensively. Here, we just
notice that the order of the estimate (\ref{main}) is sharp and that it is a particular case of a more general fact
that can be obtained similarly:
\[ - \log \pr{ N(Y([0,1]),\eps) < k } \approx  (k\eps)^{-\alpha},
\] 
which is valid for $1\le k \le \eps^{-1}$, $1<\alpha<2$, and for 
$1\le k\le \delta \eps^{-\frac{\alpha}{1+\alpha}}$, $0<\alpha<1$. 
More efforts are needed to understand the remaining
cases, e.g.\ $\eps^{-\frac{\alpha}{1+\alpha}}\ll k \ll \eps^{-\alpha}$, $0<\alpha<1$.
\end{rem}

\subsection{Proof of Proposition~\ref{prop:coverlp}} 
We will now prove inequality (\ref{main}). For this purpose, let us introduce the notation 
$$
N_{[0,t]}(\eps):=N(Y([0,t]),\eps), \qquad t\geq 0.
$$ 
For a given $t\geq 0$, $N_{[0,t]}(\eps)$ counts how many intervals are needed in order to cover 
the range of the process when only looking at the path until time $t$.

Let $T=\eps^{-\alpha}$. By scaling we have  
\[
  \pr{N_{[0,1]}(\eps)\le \delta k}=  \pr{ N_{[0,T]}(T^{1/\alpha}\eps)\le \delta k}
  =\pr{ N_{[0,T]}(1)\le \delta k}.
\]
By splitting the time interval $[0,T]$ in $k$ equal pieces, we get intervals of length
$L=T/k \ge k^\alpha$, since $T\ge k^{1+\alpha}$.

Observe that if $N_{[0,T]}(1)\le \delta k$, then there are at most $\lceil \delta k\rceil$ points 
where the function $t\mapsto N_{[0,t]}(\eps)$ increases. Thus, there are at least 
$\lfloor (1-\delta)k\rfloor$ of the intervals, where this function does not increase. 
Thus, there exists a set of integers $J\subseteq\{ 0,\ldots,k-1\}$ such that 
$|J|\geq\lfloor(1-\delta)k\rfloor$ 
and there is no increase of covering numbers
\begin{equation} \label{Nstops}
           N_{[0,(j+1)L]}(1)= N_{[0,jL]}(1), \qquad \forall j\in J.
\end{equation}
Let $\delta<1/2$. Notice that the number of choices for $J$ satisfying 
$|J|\geq \lfloor(1-\delta)k \rfloor$ can be expressed as $2^k \pr{B_k\geq \lfloor(1-\delta)k \rfloor}$
where $B_k$ is a sum of $k$ Bernoulli random variables attaining the values $0$ and $1$ with equal probabilities.
By the classical Chernoff bound for the large deviations of $B_k$ we see that this number is smaller than:
\begin{equation} \label {numberofJ}
 \left(\delta^{-\delta}(1-\delta)^{-(1-\delta)}\right)^k := \exp(\delta_1 k), 
\end{equation}
while $\delta_1$ satisfies $\delta_1\to 0$, as $\delta\to 0$.

For a while, we fix an index set $J$.
We enlarge the events from (\ref{Nstops}) as follows:
\begin{eqnarray*}
\Omega_j &:=& \left\{  N_{[0,(j+1)L]}(1)= N_{[0,jL]}(1) \le k  \right\}
\\
&\subseteq & \left\{ Y((j+1)L)\in Y[0,jL]+[-1,1],   N_{[0,jL]}(1) \le k \right\}
\\
&=& \left\{ Y((j+1)L)-Y(jL)\in Y[0,jL]+[-1,1] -Y(jL), N_{[0,jL]}(1) \le k   
\right\}
\\
&=:& \Omega'_j .
\end{eqnarray*}
Let, as usual, ${\cal F}_{t}$ denote the filtration generated by the process $Y$ up 
to time $t$. We have, by stationarity and independence of increments,
\begin{eqnarray*}
\esssup\ \P(\Omega'_j|{\cal F}_{jL}) &\le& 
\sup_A\left\{ \P(Y(L)\in A+[-1,1]),\, N(A,1)\le k \right\}
\\
&\le&
\sup_{A'} \left\{ \P(Y(L)\in A'),\, N(A',2)\le k \right\}
\\
&\le&
\sup_{A'} \left\{ \P(Y(L)\in A'),\, |A'| \le 2k \right\}
\\
&\le&
\sup_{A''} \left\{ \P(Y(1)\in A''),\, |A''| \le 2 \right\}=: c_1<1.
\end{eqnarray*}
By a standard conditioning argument, we find
\[
\P\left(\bigcap_{j\in J} \Omega_j\right) \le 
\P\left(\bigcap_{j\in J} \Omega'_j\right) \le
\prod_{j\in J} \esssup\ \P\left(\Omega'_j|{\cal F}_{jL}   \right)\le c_1^{|J|} \le c_1^{(1-\delta)k}.
\]
By summing up over all sets $J$, we have
\begin{eqnarray*}
\P(N_{[0,1]}(\eps^2)<\delta k) &\le& 
  \sum_{J:|J|\geq\lceil(1-\delta)k\rceil} \P\left(\bigcap_{j\in J} \Omega_j\right)
\\ &\le& |\{J:|J|\geq \lceil (1-\delta)k\rceil \}|  \ c_1^{(1-\delta)k}
\\
&\le& \exp(\delta_1 k) c_1^{(1-\delta)k}\, ;
\end{eqnarray*}
and we are done with (\ref{main}) if $\delta$ is chosen so small that
$\exp(\delta_1) c_1^{1-\delta}<1$.

\subsection{Alternative proof via local times} \label{sec:localtimeproof}
Here, we give an alternative proof for Proposition~\ref{prop:coverlp} when $\alpha>1$. 
In this case, the strictly $\alpha$-stable process $Y$ possesses a continuous local 
time $L$: 
$$ 
\int_B L(x)\, \d x = \int_0^1 \ind_{B}(Y(t))\, \d t,\qquad \text{for all Borel sets $B$.}
$$

We define $L^*=\sup_x L(x)$, the maximum of local time of the $\alpha$-stable L\'evy 
process considered on the time interval $[0,1]$. It is shown by Lacey (\cite{laceylocal}) that
\[
    \log \P(L^*> u) \sim -c u^\alpha, \qquad \text{as $u\to\infty$},
\]
for some (explicitly known) constant $c>0$. Therefore, for $\eps$ small enough,
\[
\P(N(K,\eps)<k)
\le
\P(L^* > (\eps k)^{-1})
\le \exp(- c'(\eps k)^{-\alpha}).
\]
In particular, by letting 
$k=\left\lceil \eps^{-\alpha/(1+\alpha)}\right\rceil$ we get
\[
\P(N(K,\eps)<k)
\le 
\exp(-c'' \eps^{-\alpha/(1+\alpha)}),
\]
as required in (\ref{main}).

\section{Some further remarks on extensions} \label{sec:remarks}

\paragraph{1) Slowly varying terms in the asymptotics of $Y$.} One may add a slowly varying term 
in the asymptotics in (\ref{e:weak:y}) or (\ref{e:knownrange}) and still use the same method 
of proof to the get a similar result. Also, one can consider what happens if $Y$ has polynomial 
small deviation behaviour or super-exponential behaviour and obtain a similar result using 
the same method of proof.  

However, it is not immediately clear what happens if $X$ has a small deviation order below or 
above the exponential scale.

\medskip
\paragraph{2) Chung's Law of the iterated logarithm.} It is straightforward to derive from the results 
from Theorem~\ref{thm:main} and Theorem~\ref{thm:mainstrong}  the lower bounds in the 
respective Chung law if, additionally, 
$Y$ satisfies a certain self-similarity. Notice that the derivation of the upper bounds usually needs some 
independence arguments that are specific for the considered processes. We preferred not to go into these details in order
to maintain a certain level of generality.

\medskip
\paragraph{3) Small deviations in $L_p$-norm.} Note that we investigate the small deviation problem 
for $X\circ Y$ w.r.t.\ the supremum norm in this article. It would be interesting to study the 
small deviations in other norms, e.g. for
$$
 \int_0^1 |X(Y(t))|^p \, \d t
$$
with given $p>0$, and investigate their dependence on the small deviations of $X$, $Y$ and, if applicable, 
the local time of $Y$. Nothing seems to be known about this problem even for iterated Brownian motion.

\hypertarget{vd}{~}\pdfbookmark[1]{References}{vd}


\def\cprime{$'$}
\providecommand{\bysame}{\leavevmode\hbox to3em{\hrulefill}\thinspace}
\providecommand{\MR}{\relax\ifhmode\unskip\space\fi MR }
\providecommand{\MRhref}[2]{%
  \href{http://www.ams.org/mathscinet-getitem?mr=#1}{#2}
}
\providecommand{\href}[2]{#2}

\end{document}